\newcounter{cequation}[section]
\newtheorem{theorem}[cequation]{Theorem}
\newtheorem*{theorem*}{Theorem}
\newtheorem{lemma}[cequation]{Lemma}
\newtheorem{corollary}[cequation]{Corollary}
\newtheorem{proposition}[cequation]{Proposition}
\theoremstyle{definition}
\newtheorem{example}[cequation]{Example}
\newtheorem*{definition*}{Definition}
\newtheorem{question}[cequation]{Question}
\newtheorem*{notation*}{Notation}
\theoremstyle{remark}
\newtheorem{remark}[cequation]{Remark}
\makeatletter\@addtoreset{equation}{section}
\makeatletter\@addtoreset{section}{part}
\def \O {\mathcal{O}}
\def \CC {\mathbb{C}}
\def \P {\mathbb{P}}
\def \PP {\mathbb{P}}
\def \AA {\mathbb{A}}
\def \QQ {\mathbb{Q}}
\def \ZZ {\mathbb{Z}}
\def \Cl {\mathrm{Cl}}
\def \Pic {\mathrm{Pic}}
\def \Aut {\mathrm{Aut}}
\def \GL {\mathrm{GL}}
\def \ge {\geqslant}
\def \le {\leqslant}
\title{On automorphisms of quasi-smooth weighted complete intersections}
\author{Victor Przyjalkowski and Constantin Shramov}
\address{\emph{Victor Przyjalkowski}
\newline
\textnormal{Steklov Mathematical Institute of RAS, 8 Gubkina street, Moscow 119991, Russia.
}
\newline
\textnormal{National Research University Higher School of Economics, Laboratory of Mirror Symmetry, NRU HSE, 6 Usacheva street, Moscow, 117312, Russia.
}
\newline
\textnormal{\texttt{victorprz@mi.ras.ru, victorprz@gmail.com}}}
\address{\emph{Constantin Shramov}
\newline
\textnormal{Steklov Mathematical Institute of RAS,
8 Gubkina street, Moscow 119991, Russia.
}
\newline
\textnormal{National Research University Higher School of Economics, Laboratory of Algebraic Geometry, NRU HSE, 6 Usacheva str., Moscow, 117312, Russia.
}
\newline
\textnormal{\texttt{costya.shramov@gmail.com}}}
\thanks{Victor Przyjalkowski was supported by
Laboratory of Mirror Symmetry NRU HSE, RF government
grant, ag. \textnumero 14.641.31.0001.
Constantin Shramov was supported by the
HSE University Basic Research Program,
Russian Academic Excellence Project~\mbox{``5-100''}.
Both authors are supported by the Foundation for the
Advancement of Theoretical Physics and Mathematics ``BASIS''.}
\begin{document}

\begin{abstract}
We show that every reductive subgroup of the automorphism group of a quasi-smooth well formed weighted complete intersection
of dimension at least $3$
is a restriction of a subgroup in the automorphism group in the ambient weighted projective space.
Also, we provide examples demonstrating that
the automorphism group of a  quasi-smooth
well formed Fano weighted complete intersection may be infinite
and even non-reductive.
\end{abstract}

\maketitle

\section{Introduction}

One of the ways to obtain interesting examples of Fano varieties is
to construct them as complete intersections in weighted projective spaces.
We refer the reader to~\cite{Do82}
and~\cite{IF00} (or to~\S\ref{section:preliminaries} below) for
definitions and basic properties of weighted projective spaces
and complete intersections therein.
One of the advantages of such constructions
is that many properties of the resulting varieties are easy to analyse.
In particular, the following is known about automorphism
groups of weighted complete intersections.

\begin{theorem}[{\cite[Theorem~1.3]{PrzyalkowskiShramov-AutWCI}}]
\label{theorem:automorphisms}
Let $X$ be a smooth well formed
weighted complete intersection of dimension~$n$.
Suppose that either $n\ge 3$, or $K_X\neq 0$.
Then the group~\mbox{$\Aut(X)$} is finite unless
$X$ is isomorphic either to $\P^n$ or to a quadric hypersurface in~$\P^{n+1}$.
\end{theorem}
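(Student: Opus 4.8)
The plan is to reduce the assertion to the vanishing $H^0(X,T_X)=0$. Write $X=X_{d_1,\dots,d_c}\subset\PP:=\PP(a_0,\dots,a_N)$, so $n=N-c$, and recall the standard facts that $\O_X(1)=\O_{\PP}(1)|_X$ is ample, that $\omega_X\cong\O_X(k)$ with $k=\sum_j d_j-\sum_i a_i$, that $H^i(X,\O_X)=0$ for $0<i<n$, and (a Lefschetz-type theorem) that $\Pic(X)=\ZZ\cdot\O_X(1)$ once $n\ge 3$. Hence if $n\ge 3$ the group $\Aut(X)$ fixes the ample generator $\O_X(1)$, while if $K_X\ne 0$ it fixes $K_X$, one of $\pm K_X$ being ample; in either case $\Aut(X)$ preserves an ample class, so it embeds as a closed subgroup of $\mathrm{PGL}\big(H^0(X,L^{\otimes m})\big)$ for a suitable very ample $L^{\otimes m}$ and is therefore a linear algebraic group. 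Such a group is finite exactly when its identity component $\Aut^0(X)$ is trivial, that is, when $H^0(X,T_X)=0$; so it suffices to show $H^0(X,T_X)\ne 0\ \Rightarrow\ X\cong\PP^n$ or $X\cong$ a quadric.

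I would first dispose of the non-Fano cases. If $k>0$, then $K_X$ is ample, so $X$ is of general type and $\Aut(X)$ is finite by the classical finiteness theorem for automorphisms of varieties of general type. If $k=0$, then $n\ge 3$ by hypothesis; since $\omega_X\cong\O_X$ one has $T_X\cong\Omega^{n-1}_X$, and together with $H^{n-1}(X,\O_X)=0$ this gives
\[
H^0(X,T_X)\cong H^0(X,\Omega^{n-1}_X)=H^{n-1,0}(X)\cong\overline{H^{n-1}(X,\O_X)}=0.
\]
So from now on $-K_X$ is ample; if $n=1$ this already forces $X\cong\PP^1$, so assume $n\ge 2$.

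Next I would combine the normal bundle sequence of the smooth subvariety $X\subset\PP$,
\[
0\longrightarrow T_X\longrightarrow T_{\PP}|_X\longrightarrow\bigoplus_{j=1}^{c}\O_X(d_j)\longrightarrow 0,
\]
with the Euler sequence of $\PP$ restricted to $X$,
\[
0\longrightarrow\O_X\longrightarrow\bigoplus_{i=0}^{N}\O_X(a_i)\longrightarrow T_{\PP}|_X\longrightarrow 0.
\]
Since $H^1(X,\O_X)=0$, the second sequence yields $H^0(X,T_{\PP}|_X)\cong\big(\bigoplus_i H^0(X,\O_X(a_i))\big)\big/\CC$, and then $H^0(X,T_X)$ is the kernel of the induced map to $\bigoplus_j H^0(X,\O_X(d_j))$. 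Using that a well formed quasi-smooth weighted complete intersection is projectively normal, so that $H^0(X,\O_X(m))\cong S_m/(I\cap S_m)$ with $S=\CC[x_0,\dots,x_N]$ the weighted polynomial ring and $I=(f_1,\dots,f_c)$ the ideal of $X$, one finds that a nonzero global vector field on $X$ amounts to a tuple $(g_0,\dots,g_N)$ with $g_i\in S_{a_i}$, not all $g_i$ lying in $I$ and not proportional to the Euler tuple $(a_0x_0,\dots,a_Nx_N)$, such that $\sum_i g_i\,\partial f_j/\partial x_i\in I$ for every $j$. In other words, every vector field on $X$ extends to a vector field on $\PP$ tangent to $X$, i.e. comes from a one-parameter subgroup of $\Aut(\PP)$ stabilizing $X$.

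It will then remain to show that the existence of such a vector field $v=\sum_i g_i\partial_{x_i}$ forces $X\cong\PP^n$ or a quadric, and this is the crux of the argument and the step I expect to be the main obstacle. Since $\Aut^0(X)\ne\{1\}$, the stabilizer of $X$ in $\Aut(\PP)$ is positive-dimensional, hence contains a one-parameter subgroup which is a torus or a unipotent group; its generator $v$ is then a semisimple, resp. nilpotent, vector field, and the condition $v(f_j)\in I$ says precisely that the defining equations are adapted to a second grading (resp. a filtration) of $S$, different from the one given by the weights $a_i$. The goal is to show that the presence of this extra structure, combined with the smoothness of $X$ — which forces the Jacobian $\big(\partial f_j/\partial x_i\big)$ to have rank $c$ everywhere on $X$ — and with the numerical constraints coming from well-formedness, quasi-smoothness and the inequality $\sum_j d_j<\sum_i a_i$, leaves only the following possibilities: there are no equations, so $X=\PP(a_0,\dots,a_N)$ and smoothness forces $a_i=1$ for all $i$, whence $X=\PP^n$; or $c=1$, all $a_i=1$ and $d_1=2$, a quadric in $\PP^{n+1}$; or some $f_j$ is linear, which one uses to eliminate a variable and pass to a weighted complete intersection of smaller codimension, concluding by induction (the classical case of smooth complete intersections in ordinary projective space, $a_i\equiv1$, being treated directly). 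Carrying out this final case analysis in the weighted setting — in particular ruling out vector fields compatible with the collection of weights $(a_i)$ but not with the collection of degrees $(d_j)$ — is where the real work lies.
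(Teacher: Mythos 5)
You should be aware that this paper does not prove Theorem~\ref{theorem:automorphisms} at all: it is quoted from \cite{PrzyalkowskiShramov-AutWCI}, so there is no internal proof to compare with. Judged on its own terms, your argument is incomplete. The reductions you do carry out are fine and standard: $\Aut(X)$ preserves an ample class (via $\Pic(X)\cong\ZZ$ for $n\ge 3$, or via $\pm K_X$ otherwise), hence is a linear algebraic group, and its finiteness is equivalent to $H^0(X,T_X)=0$; the general type and Calabi--Yau cases are disposed of correctly; and the Euler/normal-bundle computation showing that every vector field on $X$ is the restriction of a vector field on $\PP$ tangent to $X$ is legitimate (it uses $H^1(X,\O_X)=0$ together with the surjectivity of the restriction maps $H^0(\PP,\O_\PP(m))\to H^0(X,\O_X(m))$, i.e.\ \cite[Corollary~3.3]{PrzyalkowskiShramov-AutWCI}). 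But all of this is the easy part. The actual content of the theorem is the claim you defer: that a smooth well formed Fano weighted complete intersection carries no nonzero tangent vector field unless it is $\PP^n$ or a quadric. For this you offer only a programme (``a second grading or filtration'', ``numerical constraints'', ``case analysis''), and you explicitly acknowledge that ``the real work lies'' there. That is a genuine gap, not a routine verification left to the reader.

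To see that the missing step is substantial and cannot follow from soft considerations, look at \S\ref{section:infinite-examples} of this very paper: it exhibits quasi-smooth well formed Fano weighted hypersurfaces, not intersections with linear cones, with infinite and even non-reductive automorphism groups arising from $\CC^*$- and $\CC^+$-subgroups of $\Aut(\PP)$ preserving them (e.g.\ $x_{N-1}x_N+F(x_0,\ldots,x_{N-2})=0$ in $\PP(1^{N-1},a,a)$ with $a\ge 2$). All the structure your outline invokes --- an auxiliary grading adapted to the equations, compatibility of the weights with the degrees, $i_X>0$, well-formedness --- is present in these examples; the only hypothesis that fails is smoothness. Hence any completion of your plan must exploit smoothness (as opposed to quasi-smoothness) in an essential, quantitative way; in the weighted setting this means the delicate combinatorial constraints that smoothness imposes on the weights $a_i$ and degrees $d_j$ (cf.\ \cite{PrzyalkowskiShramov-Weighted}), and your sketch gives no indication of how the everywhere-maximal rank of the Jacobian is to be converted into the required exclusion, nor does the proposed induction on linear equations interact with it. As it stands, your argument is a reduction of the theorem to its hardest part rather than a proof of it.
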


As a by-product of Theorem~\ref{theorem:automorphisms}, one obtains the following.

\begin{corollary}
\label{corollary:automorphisms-smooth-reductive}
Let $X$ be a smooth well formed
weighted complete intersection.
Suppose that either $\dim X\ge 3$, or $K_X\neq 0$.
Then the group~\mbox{$\Aut(X)$} is reductive.
\end{corollary}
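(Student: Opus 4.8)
The plan is to derive the corollary directly from Theorem~\ref{theorem:automorphisms}. First recall the convention that a linear algebraic group is \emph{reductive} if the unipotent radical of its identity component is trivial; with this convention every finite group, and in particular the trivial group, is reductive, and so is every semisimple group. Since $X$ satisfies the hypotheses of Theorem~\ref{theorem:automorphisms} (it is a smooth well formed weighted complete intersection with either $\dim X\ge 3$ or $K_X\neq 0$), that theorem tells us that $\Aut(X)$ is either finite, or $X$ is isomorphic to $\P^n$, or $X$ is a smooth quadric hypersurface in~$\P^{n+1}$.

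I would then go through the three cases. If $\Aut(X)$ is finite we are done by the remark above. If $X\cong\P^n$, then $\Aut(X)\cong\mathrm{PGL}_{n+1}(\CC)$, which is connected and semisimple, hence reductive. If $X$ is a smooth quadric hypersurface in~$\P^{n+1}$ cut out by a quadratic form~$q$ in $n+2$ variables, then $\Aut(X)$ is the image in $\mathrm{PGL}_{n+2}(\CC)$ of the orthogonal group $\mathrm{O}(q)\cong\mathrm{O}_{n+2}(\CC)$; its identity component is $\mathrm{PSO}_{n+2}(\CC)$, which is semisimple, so $\Aut(X)$ is reductive in this case as well. This exhausts all the possibilities allowed by Theorem~\ref{theorem:automorphisms}, which proves the corollary.

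There is essentially no obstacle here: all the substance is contained in Theorem~\ref{theorem:automorphisms}, and one only needs to supply the classical descriptions of the automorphism groups of $\P^n$ and of smooth quadrics, together with the observation that these groups, as well as any finite group, are reductive. The one point worth stating explicitly is that reductivity is a condition on the identity component only, so that the (possibly disconnected) automorphism group of an even-dimensional smooth quadric is still covered.
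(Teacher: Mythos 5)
Your proof is correct and is exactly the argument the paper intends: the corollary is stated as an immediate by-product of Theorem~\ref{theorem:automorphisms}, with the only content being that finite groups, $\mathrm{PGL}_{n+1}(\CC)$, and the automorphism group of a smooth quadric (an extension of a finite group by $\mathrm{PSO}_{n+2}(\CC)$) are all reductive. No further comparison is needed.
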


Although smooth varieties are most natural to study,
in many situations it makes sense to consider weighted complete
intersections with a slightly weaker property, namely, quasi-smooth ones
(see~\S\ref{section:preliminaries}).
The main purpose of this paper is to prove the following result
on automorphism groups of quasi-smooth weighted complete intersections.

\begin{theorem}\label{theorem:induced-action}
Let $\PP$ be a well formed weighted projective space, and let $X\subset\PP$ be a
quasi-smooth well formed weighted complete intersection
which is not an intersection with a linear cone.
Suppose that either $\dim X\ge 3$, or $\dim X=2$ and $K_X\neq 0$,
or $X$ is a rational curve. Then $\Aut(X)$ is a linear algebraic group.
Furthermore, let $\Gamma$ be a reductive linear algebraic subgroup in $\Aut(X)$.
Then there is an action of $\Gamma$ on $\PP$ that restricts to the action of $\Gamma$ on $X$.
\end{theorem}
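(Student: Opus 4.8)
The plan is to reduce the theorem to a statement about the graded coordinate ring of $\PP$, and then to prove that statement by an explicit induction over the blocks of equal weights; reductivity of $\Gamma$ will be used in two essential places --- once to linearize the action, and once to split off invariant complements.

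Write $\PP=\PP(a_0,\dots,a_N)$, let $S=\CC[x_0,\dots,x_N]$ with $\deg x_i=a_i$ be its graded coordinate ring, let $f_1,\dots,f_k$ (of degrees $d_1,\dots,d_k$) cut out $X$, put $I=(f_1,\dots,f_k)$, and set $R=\bigoplus_{m\ge 0}H^0(X,\O_X(m))$. I will freely use the standard properties of a quasi-smooth well formed weighted complete intersection that is not an intersection with a linear cone: the restriction maps $H^0(\PP,\O_\PP(m))\to H^0(X,\O_X(m))$ are surjective with kernel $I_m$, so that $R\cong S/I$ as graded rings; $X$ has only quotient singularities, hence is $\QQ$-factorial and $\O_X(\ell)$ is a very ample line bundle for some $\ell>0$; and $\Cl(X)$ is generated by the class of $\O_X(1)$ when $\dim X\ge 3$ (Grothendieck--Lefschetz for quasi-smooth complete intersections), while in the remaining cases the class of $\O_X(1)$ is still $\Aut(X)$-invariant, because $K_X$ is a nonzero multiple of it when $\dim X=2$ and $K_X\ne 0$, and by inspection when $X\cong\P^1$. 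The first assertion then follows at once: since $\Aut(X)$ fixes the class of the very ample line bundle $\O_X(\ell)$, the corresponding projective embedding identifies $\Aut(X)$ with a closed subgroup of $\mathrm{PGL}$ of $H^0(X,\O_X(\ell))$, hence with a linear algebraic group.

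For the second assertion, fix a reductive subgroup $\Gamma\subseteq\Aut(X)$. The graded $\CC$-algebra automorphisms of $R$ form a linear algebraic group $\widehat{G}$ acting on each $R_m$, and, using that every element of $\Aut(X)$ fixes the class of $\O_X(1)$, one gets a central extension $1\to\mathbb{G}_m\to\widehat{G}\to\Aut(X)\to 1$, the $\mathbb{G}_m$ being the grading scalars. Pulling back along $\Gamma\hookrightarrow\Aut(X)$ gives a central extension of the reductive group $\Gamma$ by $\mathbb{G}_m$, which splits after passing to a suitable finite cover $\Gamma'\to\Gamma$; thus $\Gamma'$ acts on $R$ by graded automorphisms, compatibly with its action on $X=\mathrm{Proj}(R)$. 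Now I build a graded $\Gamma'$-action on $S$ making the quotient map $q\colon S\to R$ equivariant. Order the distinct weights $w_1<\dots<w_s$ and let $S^{(l)}\subseteq S$ be the subalgebra generated by the variables of weight $\le w_l$; I construct the $\Gamma'$-action on $S^{(l)}$, with $q|_{S^{(l)}}$ equivariant, by induction on $l$. Since none of the $d_j$ is a weight of $\PP$ (this is what "not a linear cone" means), a degree count gives $I\cap S_{w_l}\subseteq S^{(l-1)}_{w_l}$, so $q(S^{(l-1)}_{w_l})$ is a $\Gamma'$-submodule of $R_{w_l}$ whose codimension equals the number $m_l$ of variables of weight $w_l$. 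Choose a $\Gamma'$-invariant complement $Q$ (reductivity); since $S^{(l-1)}_{w_l}$ maps onto $q(S^{(l-1)}_{w_l})$, one may replace the $m_l$ variables of weight $w_l$ by "tilted" ones, differing from the original variables by elements of $S^{(l-1)}_{w_l}$, whose span $V\subset S_{w_l}$ satisfies $q(V)\subseteq Q$; then $q|_V\colon V\to Q$ is an isomorphism. Transport the $\Gamma'$-action from $Q$ to $V$, let $\Gamma'$ act on $S^{(l)}=S^{(l-1)}[V]$ through its actions on the two factors, and observe that $q|_{S^{(l)}}$ is equivariant on algebra generators, hence everywhere. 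For $l=s$ this produces a graded $\Gamma'$-action on $S$; it descends to an action of $\Gamma'$ on $\PP=\mathrm{Proj}(S)$ preserving $X=V(I)$ and restricting there to the original action. Finally, the finite kernel of $\Gamma'\to\Gamma$ acts on $R$ through the grading scalars, hence (each block $V$ having been cut out inside an $R_{w_l}$) acts on $S$ by grading scalars as well, so it acts trivially on $\PP$; therefore this is in fact an action of $\Gamma$, as required.

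The main obstacle is the linearization step --- turning the abstract action of the reductive group $\Gamma$ on the variety $X$ into a genuine graded action on the section ring $R$ --- which is forced to pass through a finite cover precisely because $\O_X(1)$ is only a divisorial sheaf rather than a line bundle; one must then also verify that this cover acts on $\PP$ through $\Gamma$ itself. Once $\Gamma'$ acts on $R$, the inductive extension to $S$ is almost automatic; the one point needing care there is the "tilting" of the newly added variables so that the restriction map becomes equivariant one weight-block at a time, together with the observation that the hypothesis "not an intersection with a linear cone" is exactly what makes $R$ and $S$ have matching algebra generators in each degree. Reductivity is used both for the linearization and for extracting the invariant complements $Q$, and it cannot be dropped --- this is what the non-reductive examples announced in the abstract are meant to demonstrate.
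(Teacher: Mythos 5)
Your proposal is correct and takes essentially the same route as the paper's proof: invariance of the class of $\mathcal{O}_X(1)$ via Theorems~\ref{theorem:Okada} and~\ref{theorem:adjunction}, linearization of the $\Gamma$-action on the section ring through a finite central cover, an inductive extension of the action to the coordinate ring of $\PP$ by choosing invariant complements (this is where reductivity enters, exactly as in the paper) and using the ``not an intersection with a linear cone'' hypothesis as in Remark~\ref{remark:coordinate}, followed by descent to $\PP$ after checking that the kernel of $\Gamma'\to\Gamma$ acts by grading scalars. The only point stated too quickly is the case $\dim X=2$, $K_X\neq 0$ of the first assertion, where passing from invariance of $K_X=-i_XA$ to invariance of $A$ also uses that $\Cl(X)$ is torsion free, which is supplied by Theorem~\ref{theorem:Okada}.
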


We point out that the assertion of Theorem~\ref{theorem:induced-action}
fails for curves of genus~$1$ that are complete intersections,
see Example~\ref{example:elliptic-fail}.

As a straightforward application of Corollary~\ref{corollary:automorphisms-smooth-reductive} and Theorem~\ref{theorem:induced-action},
we obtain the following assertion.

\begin{corollary}\label{corollary:smooth-induced}
Let $X\subset\PP$ be a smooth well formed
weighted complete intersection which is not an intersection with a linear cone.
Suppose that either $\dim X\ge 3$, or~\mbox{$\dim X=2$} and~\mbox{$K_X\neq 0$},
or $X$ is a rational curve.
Then there is
an action of the group $\Aut(X)$ on~$\PP$ that restricts to the action of $\Aut(X)$ on $X$.
\end{corollary}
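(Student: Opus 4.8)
The plan is to deduce this statement by combining the two results quoted just above it, so that essentially no new work is needed. First I would invoke Corollary~\ref{corollary:automorphisms-smooth-reductive}: since $X$ is smooth, well formed, and satisfies the dimension/canonical-class hypothesis (note that a smooth rational curve is isomorphic to $\P^1$, so $K_X\neq 0$ fails only in the excluded case, but the hypothesis explicitly allows $X$ to be a rational curve, so all cases of the present statement are covered), the group $\Aut(X)$ is a reductive linear algebraic group. Here I would remark that a smooth weighted complete intersection is in particular quasi-smooth, so the hypotheses of Theorem~\ref{theorem:induced-action} are also satisfied by $X\subset\PP$.

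Next I would apply Theorem~\ref{theorem:induced-action} with $\Gamma=\Aut(X)$ itself. By the first part of that theorem $\Aut(X)$ is a linear algebraic group, and by the previous paragraph it is reductive, so it is a legitimate choice for the reductive subgroup~$\Gamma$ in the second assertion of the theorem. The theorem then yields an action of $\Gamma=\Aut(X)$ on~$\PP$ whose restriction to $X$ is the given action, which is exactly the claim.

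The only point that requires a small amount of care, rather than being a literal quotation, is matching the hypotheses: one must check that ``smooth'' implies ``quasi-smooth'' for weighted complete intersections (true by definition, since smoothness of $X$ at a point forces the affine cone to be smooth there away from the vertex), and that the case distinctions in the two cited statements are compatible. In particular, for $\dim X=2$ with $K_X=0$ the statement is (correctly) not claimed, and for a rational curve the smoothness forces $X\cong\P^1$, a case handled by Theorem~\ref{theorem:induced-action} via the ``rational curve'' clause. I do not anticipate a genuine obstacle here; the substance of the corollary is entirely contained in Theorem~\ref{theorem:induced-action} and Corollary~\ref{corollary:automorphisms-smooth-reductive}, and the argument is a one-line combination of the two once these compatibility remarks are in place.
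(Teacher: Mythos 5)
Your argument is exactly the paper's intended one: the corollary is stated there as a straightforward combination of Corollary~\ref{corollary:automorphisms-smooth-reductive} (which gives reductivity of $\Aut(X)$ in every case allowed here, since a smooth rational curve has $K_X\neq 0$) with Theorem~\ref{theorem:induced-action} applied to $\Gamma=\Aut(X)$. One small correction: the implication ``smooth $\Rightarrow$ quasi-smooth'' is \emph{not} true by definition, and your parenthetical justification is not valid --- the affine cone can be singular away from the vertex even when $X$ is smooth (the paper exhibits such examples when well-formedness fails); in the present setting the implication holds because $X$ is well formed, by Lemma~\ref{lemma:smooth-vs-quasismooth}, and that is the reference you should use for this step.
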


It would be interesting to know the answer to the following question (cf. Example~\ref{example:non-reductive}).

\begin{question}
Does the second assertion of
Theorem~\ref{theorem:induced-action} hold for the whole automorphism group
$\Gamma=\Aut(X)$
without the assumption that $\Gamma$ is reductive?
\end{question}

\smallskip

The plan of the paper is as follows.
In~\S\ref{section:preliminaries} we collect some auxiliary facts
about weighted complete intersections.
In~\S\ref{section:automorphisms}
we collect several auxiliary results on automorphism groups of weighted
complete intersections.
In~\S\ref{section:automorphisms-restricted}
we prove Theorem~\ref{theorem:induced-action}.
In~\S\ref{section:infinite-examples} we provide examples demonstrating that
an automorphism group of a  quasi-smooth
well formed Fano weighted complete intersection may be infinite
and even non-reductive (so that the assertion of
Corollary~\ref{corollary:automorphisms-smooth-reductive} does not hold
in this case).

\smallskip
We are grateful to I.\,Cheltsov, R.\,Hartshorne, T.\,Okada, Yu.\,Prokhorov,
and D.\,Timashev for useful discussions. Special thanks go to the anonymous referee who spotted
several gaps in the first draft of the paper.

\section{Preliminaries}
\label{section:preliminaries}

In this section we collect some auxiliary facts
about weighted complete intersections.

Put
$$
\P=\P(a_0,\ldots,a_N)=\mathrm{Proj}\, \CC[x_0,\ldots,x_N],
$$
where the weight of $x_i$ equals $a_i$.
Without loss of generality we assume that $a_0\le\ldots\le a_N$.
We will use the abbreviation
\begin{equation*}
(a_0^{r_0},\ldots,a_M^{r_M})=
(\underbrace{a_0,\ldots,a_0}_{r_0\ \text{times}},\ldots,\underbrace{a_M,\ldots,a_M}_{r_M\ \text{times}}),
\end{equation*}
where $r_0,\ldots,r_M$ will be allowed to be
any positive integers. If some of $r_i$ is equal to $1$ we drop it for simplicity.

We say that a subvariety $X\subset\P$ of codimension $k\ge 1$ is a \emph{weighted complete
intersection of multidegree $(d_1,\ldots,d_k)$} if its weighted homogeneous ideal in $\CC[x_0,\ldots,x_N]$
is generated by a regular sequence of $k$ homogeneous elements of degrees $d_1,\ldots,d_k$.
The regularity of the above sequence is equivalent to the requirement that the codimension of (every irreducible component of) the variety $X$
equals~$k$, see, for instance,~\cite[\S2]{PSh20a}.

We put some natural restrictions on $\PP$ and $X$ to avoid too bad
complete intersections.
We say that $X$ is \emph{well formed} if the following two conditions hold. First, we require that~$\PP$ is well formed, that is,
the greatest common divisor of any $N$ of the weights~$a_i$ equals~$1$.
Second,
$$
\mathrm{codim}_X \left( X\cap\mathrm{Sing}\,\P \right)\ge 2.
$$
Note that the singular locus of $\PP$ is a union of some coordinate strata.

We say that~$X$ is \emph{an intersection
with a linear cone} if $d_j=a_i$ for some~$i$ and~$j$.

\begin{remark}\label{remark:coordinate}
Let $X\subset\PP=\PP(a_0^{r_0},\ldots,a_M^{r_M})$,
where $a_0<\ldots<a_M$,
be a weighted complete intersection which is not
an intersection with a linear cone.
Write $\PP\cong\operatorname{Proj}(R(\PP))$,
where
$$
R(\PP)=
\CC\left[x_{0,1},\ldots,x_{0,r_0},\ldots,x_{M,1},\ldots,x_{M,r_M}\right],
$$
so that $x_{i,p}$ is a 
coordinate of weight
$a_i$ on $\PP$.
Then none of the coordinates $x_{i,p}$ vanishes on $X$.
Indeed, if $x_{i,p}$ vanishes on $X$, then it is contained in the weighted homogeneous
ideal $I\subset R(\PP)$ of $X$. On the other hand,
$x_{i,p}$ is not contained in the ideal~\mbox{$I'\subset I$} generated
by the coordinates $x_{0,1},\ldots,x_{i-1,r_{i-1}}$ of smaller weight.
Thus one of the defining equations of $X$ must have degree equal to $a_i$,
which is not the case by assumption. Also, changing the coordinates, we deduce from this that if
a weighted homogeneous polynomial $f$ of weighted degree $a_i$ vanishes on $X$, then
$f$ depends only on the variables $x_{0,1},\ldots,x_{i-1,r_{i-1}}$ of weights smaller than~$a_i$.
\end{remark}

Every subvariety in a weighted projective space naturally comes together with a cone over it.
That is, let
$$
\AA=\mathrm{Spec}\,\CC[x_0,\ldots,x_N]\cong\AA^N.
$$
Then $\PP=\left(\AA\setminus \{0\}\right)/\CC^*$, where $\CC^*$ naturally acts
by the weights $a_0,\ldots,a_N$.
Denote the projection $\AA\setminus \{0\}\to \PP$ by $\pi$.
For a subvariety $X\subset\PP$, let $C_X$ be the closure in $\AA$ of the preimage $\pi^{-1}(X)\subset\AA\setminus\{0\}$
of $X$; we will call $C_X$ the \emph{affine cone over~$X$}.
One says that $X$ is \emph{quasi-smooth}, if the affine cone over it is smooth outside the origin.

\begin{lemma}[{cf. \cite[Theorem 3.4]{Ha62}}]
\label{lemma:WCI-connected}
Let $X\subset\PP$ be a positive-dimensional weighted complete intersection.
Then~$X$ is connected. Moreover, if $X$ is quasi-smooth,
then it is irreducible.
\end{lemma}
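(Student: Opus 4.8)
The plan is to deduce both statements from the corresponding facts about the affine cone $C_X$ together with Hartshorne's connectedness theorem. By the definition of a weighted complete intersection, the weighted homogeneous ideal $I\subset\CC[x_0,\ldots,x_N]$ of $X$ is generated by a regular sequence of length $k=\mathrm{codim}_\PP X$; hence the coordinate ring $A=\CC[x_0,\ldots,x_N]/I$ of $C_X$, being a quotient of a polynomial ring by a regular sequence, is Cohen--Macaulay, and its Krull dimension equals $\dim C_X=\dim X+1$, which is at least $2$ because $X$ is positive-dimensional. Moreover $C_X$ is by construction the closure of $\pi^{-1}(X)=C_X\setminus\{0\}$, so the punctured cone $C_X\setminus\{0\}$ is a dense open subset of $C_X$.

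To prove connectedness of $X$, I would first show that $C_X\setminus\{0\}$ is connected. This is the punctured affine cone over a Cohen--Macaulay ring of dimension at least $2$: localizing $A$ at the irrelevant maximal ideal $\mathfrak m=(x_0,\ldots,x_N)$ gives a Cohen--Macaulay, hence $\mathrm{depth}\ge 2$, local ring, and by Hartshorne's connectedness theorem, cf.~\cite[Theorem~3.4]{Ha62}, its punctured spectrum is connected. Since $C_X$ is a cone with vertex $0$ and $\CC^*$ acts with positive weights $a_i$, every connected component of $C_X\setminus\{0\}$ is $\CC^*$-invariant and accumulates at $0$; the existence of two distinct such components would disconnect the punctured spectrum of $\O_{C_X,0}$, a contradiction. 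Hence $C_X\setminus\{0\}$ is connected, and therefore so is its image $X=(C_X\setminus\{0\})/\CC^*$ under the continuous surjection $\pi$.

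Assume in addition that $X$ is quasi-smooth, so that $C_X\setminus\{0\}$ is a smooth scheme. A smooth scheme over $\CC$ is regular, and regular local rings are integral domains, so the irreducible components of $C_X\setminus\{0\}$ are pairwise disjoint; combined with the connectedness just established, this shows that $C_X\setminus\{0\}$ is irreducible. As it is dense in $C_X$, the whole cone $C_X$ is irreducible, and consequently $X=(C_X\setminus\{0\})/\CC^*$ is irreducible as well.

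The only substantial ingredient here is the connectedness of the punctured affine cone, which relies on the fact that a complete intersection is Cohen--Macaulay and on Hartshorne's connectedness theorem; the reduction from $C_X$ to $X$ via the $\CC^*$-quotient and the fact that a connected smooth scheme is irreducible are purely formal. I expect the main point requiring care to be the last step of the second paragraph, namely passing from the local connectedness statement at the vertex to the global connectedness of $C_X\setminus\{0\}$, where one uses the cone structure and the positivity of the weights $a_i$.
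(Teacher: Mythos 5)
Your proof is correct and takes essentially the same route as the paper: both rest on the affine cone $C_X$ being Cohen--Macaulay (as a complete intersection) together with Hartshorne's connectedness theorem, and both obtain irreducibility from the observation that a point lying on two components of the punctured cone would be a singular point, contradicting quasi-smoothness. The only difference is cosmetic: you use the local form of Hartshorne's theorem (depth $\ge 2$ at the vertex gives a connected punctured spectrum) and globalize via the $\CC^*$-action to get connectedness of $C_X\setminus\{0\}$, whereas the paper applies the connectedness-in-codimension-one form directly to $C_X$ to show that the cones over two putative clopen pieces of $X$ would have to meet outside the vertex.
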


\begin{proof}
First let us prove the connectedness assertion. 
Suppose that $X$ is not connected.
Write $X=X'\cap X''$, where $X'$ and $X''$
are unions of irreducible components of $X$, and $X'\cap X''=\varnothing$.
Then $C_X=C_X'\cup C_X''$, where $C_X'$ and $C_X''$
are cones over $X'$ and $X''$, respectively. 
On the other hand, the cone $C_X$ is a complete intersection in the affine space $\AA$.
In particular, it is Cohen--Macaulay, see~\cite[\S18.5]{Eisenbud}.
Moreover, since it is a cone, it is obviously connected.
By Hartshorne's Connectedness Theorem (see~\cite[Theorem~18.12]{Eisenbud}),
the intersection $C_X'\cap C_X''$ has codimension $1$ in $C_X$. 
Since the dimension of any irreducible component of $C_X$ is $\dim (X)+1\ge 2$, this means
that $C_X'\cap C_X''$ contains some points outside the vertex of the cone $C_X$.
Thus $X'\cap X''\neq\varnothing$, which is a contradiction. 

Now assume that $X$ is quasi-smooth.
Suppose that it is
reducible. Let $X_1$ be one of its irreducible components.
Let $X_2$ be another one
that intersects $X_1$ at some point $P$,
which exists by connectedness of $X$.
Let $C_{X_1}, C_{X_2}\subset C_X$ be cones over these components.
Then the intersection of $C_{X_1}$ and $C_{X_2}$ contains
the affine cone over $P$ and, thus, $C_X$ is
singular along this cone. However, this contradicts the quasi-smoothness
assumption.
\end{proof}

\begin{remark}
An alternative proof of the first assertion of Lemma~\ref{lemma:WCI-connected}
can be obtained from the Lefschetz-type theorem,
see~\cite[Proposition~1.4]{Ma99}.
\end{remark}

Note that a general quasi-smooth weighted complete intersection of dimension at least~$3$ is isomorphic to a quasi-smooth well formed weighted complete
intersection
which is not an
intersection with a linear cone, see~\cite[Proposition 2.9]{PSh20b}.

Singularities of quasi-smooth well formed weighted complete intersections
can be easily described.

\begin{proposition}[{see \cite[Proposition 8]{Di86}}]
\label{proposition:singularities-of-X}
Let $X\subset \PP$ be a quasi-smooth well formed weighted complete intersection.
Then the singular locus of $X$ is
the intersection of $X$ with the singular locus of $\PP$.
\end{proposition}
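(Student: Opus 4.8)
The plan is to work with the affine cone $C_X\subset\AA$ and transfer the local analysis between $C_X$ and $X$ via the quotient map $\pi$. Since $X$ is quasi-smooth, $C_X$ is smooth away from the origin. The first step is to recall the standard description of the singular locus of $\PP=\P(a_0,\ldots,a_N)$: it is the union of those coordinate strata $\PP_J=\{x_i=0 \text{ for } i\notin J\}$ for which $\gcd(a_j : j\in J)=m>1$, and along (the generic point of) such a stratum $\PP$ has a cyclic quotient singularity of order $m$. The well-formedness of $X$ guarantees that $X$ meets $\mathrm{Sing}\,\PP$ in codimension at least $2$, so the generic point of $X$ lies in the smooth locus of $\PP$; this will be used to show the inclusion $\mathrm{Sing}\,X\subseteq X\cap\mathrm{Sing}\,\PP$ is not vacuous to check only on a small set, and more importantly to rule out quasi-smooth points of $X$ contributing new singularities.

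The key step is a local computation at a point $P\in X$. Pick $P\in X$ and a point $\tilde P\in C_X$ over it; by quasi-smoothness $C_X$ is smooth at $\tilde P$. The stabilizer of $\tilde P$ under the $\CC^*$-action has order $m$, where $m=\gcd(a_i : x_i(\tilde P)\neq 0)$; equivalently $m$ is the order of the local quotient singularity of $\PP$ at $P$, so $P\in\mathrm{Sing}\,\PP$ exactly when $m>1$. Étale-locally near $P$, the variety $X$ is the quotient of the smooth germ $(C_X,\tilde P)$ by the cyclic group $\mu_m$ acting with the induced weights (a "slice" argument: choose a $\mu_m$-invariant smooth subvariety of $C_X$ through $\tilde P$ transverse to the $\CC^*$-orbit, and $X$ near $P$ is its $\mu_m$-quotient). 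If $m=1$ this shows $X$ is smooth at $P$, giving $\mathrm{Sing}\,X\subseteq X\cap\mathrm{Sing}\,\PP$. For the reverse inclusion, suppose $m>1$; one must check that the $\mu_m$-action on the tangent space $T_{\tilde P}(\text{slice})$ is genuinely nontrivial, i.e.\ has no eigenvalue equal to $1$ forcing the quotient to stay smooth. This is where well-formedness of $X$ enters decisively: if the $\mu_m$-representation on the slice had a trivial summand of the wrong dimension, the fixed locus — which maps to $X\cap\mathrm{Sing}\,\PP$ — would have codimension $1$ in $X$, contradicting well-formedness. So the action is "small" in Reid's sense, hence the quotient is singular at $P$, giving $X\cap\mathrm{Sing}\,\PP\subseteq\mathrm{Sing}\,X$.

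The main obstacle, and the step deserving the most care, is the slice/étale-local argument identifying $(X,P)$ with the cyclic quotient $(C_X,\tilde P)/\mu_m$ together with the verification that the induced $\mu_m$-action is small. One has to be careful that the weights $a_i$ are only relevant through their residues mod $m$, that the coordinates with $a_i$ not divisible by $m$ give directions on which $\mu_m$ acts nontrivially, and that well-formedness precisely forbids the pathological case where enough of these directions are fixed to keep $X$ smooth. I would organize this by first reducing to the Zariski-open stratum of $\mathrm{Sing}\,\PP$ containing a given component of $X\cap\mathrm{Sing}\,\PP$ (well-formedness says each such component has codimension $\ge 2$ in $X$, hence $\ge 1$ in the stratum it lies over), then invoking that a cyclic quotient singularity by a small nontrivial action is never smooth. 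Alternatively, one can simply cite \cite[Proposition~8]{Di86} for the whole statement, but the argument above is the natural self-contained route.
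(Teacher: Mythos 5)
The paper does not actually prove Proposition~\ref{proposition:singularities-of-X}: it is quoted from \cite[Proposition~8]{Di86}, and the remark following it only observes that Dimca's proof works without the complete-intersection hypothesis. So your sketch is not an alternative route but essentially an inlining of the cited argument: the local identification of $(X,P)$ with $(S,\tilde P)/\mu_m$, where $S$ is a $\mu_m$-invariant slice in the cone $C_X$ (smooth away from the origin by quasi-smoothness), smoothness of $X$ at $P$ when $m=1$, and smallness of the $\mu_m$-action when $m>1$ is exactly how the statement is proved in \cite{Di86} and in \cite[\S 5]{IF00}. The outline is sound, but two steps should be phrased more carefully. First, what must be excluded is not just a ``trivial summand'' but any quasi-reflection: you need that no nontrivial $g\in\mu_m$ fixes a subspace of codimension $\le 1$ of the slice; then the Chevalley--Shephard--Todd theorem (a quotient of a smooth germ by a finite group is smooth if and only if the group acts as a group generated by pseudo-reflections) yields that $S/\mu_m$, hence $X$, is singular at $P$. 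Your well-formedness mechanism does deliver this, provided it is run element by element: for $g$ of order $d>1$, the fixed locus of $g$ in $\AA$ is the coordinate subspace spanned by the $x_i$ with $d\mid a_i$, whose image in $\PP$ is a stratum contained in $\mathrm{Sing}\,\PP$ (this inclusion uses well-formedness of $\PP$); so if $g$ fixed a codimension-$\le 1$ subvariety of the slice, then $X\cap\mathrm{Sing}\,\PP$ would have codimension $\le 1$ in $X$ near $P$, contradicting well-formedness of $X$, and in particular the action is faithful and small. Second, the equivalence ``$P\in\mathrm{Sing}\,\PP$ if and only if $m>1$'' is itself a consequence of well-formedness of $\PP$ (it fails for $\PP(1,2)$, say), so it should be invoked as such rather than taken for granted; with these two points made explicit your argument matches the standard proof that the paper outsources to \cite{Di86}.
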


\begin{remark}
Note that in Proposition~\ref{proposition:singularities-of-X} we can omit the assumption that $X$ is a weighted complete intersection.
The proof in this case is literally the same as the proof
of~\mbox{\cite[Proposition 8]{Di86}}.
\end{remark}

\begin{corollary}
\label{corollary:singularities-of-curves}
A quasi-smooth well formed weighted complete intersection of dimension~$1$
is smooth.
\end{corollary}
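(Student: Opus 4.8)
The plan is to deduce this directly from Proposition \ref{proposition:singularities-of-X} together with the well-formedness hypothesis. Let $X \subset \PP$ be a quasi-smooth well formed weighted complete intersection with $\dim X = 1$. By Proposition \ref{proposition:singularities-of-X}, the singular locus of $X$ equals $X \cap \mathrm{Sing}\,\PP$. So it suffices to show that this intersection is empty. First I would recall that, by the definition of well-formedness recalled in this section, we have $\mathrm{codim}_X(X \cap \mathrm{Sing}\,\PP) \ge 2$. Since $\dim X = 1$, a subvariety of codimension at least $2$ in $X$ must be empty. Hence $X \cap \mathrm{Sing}\,\PP = \varnothing$, so $\mathrm{Sing}\,X = \varnothing$, i.e. $X$ is smooth.

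The only point that needs a word of care is the convention for the codimension of the empty set: the well-formedness condition $\mathrm{codim}_X(X \cap \mathrm{Sing}\,\PP)\ge 2$ is understood to allow the intersection to be empty (in which case the codimension is $+\infty$, or the condition is regarded as vacuously satisfied), and this is precisely the case that occurs here. I do not expect any genuine obstacle: the statement is essentially an immediate consequence of the dimension bookkeeping, once Proposition \ref{proposition:singularities-of-X} identifies $\mathrm{Sing}\,X$ with $X \cap \mathrm{Sing}\,\PP$.

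Thus the proof is a one-line application: $\mathrm{Sing}\,X = X \cap \mathrm{Sing}\,\PP$ by Proposition \ref{proposition:singularities-of-X}, and $X \cap \mathrm{Sing}\,\PP$ has codimension at least $2$ in the curve $X$ by well-formedness, hence is empty, so $X$ is smooth.
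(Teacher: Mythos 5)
Your argument is correct and is exactly the intended one: the paper states this as an immediate consequence of Proposition~\ref{proposition:singularities-of-X}, since well-formedness forces $X\cap\mathrm{Sing}\,\PP$ to have codimension at least $2$ in the curve $X$ and hence to be empty. Your remark about the convention for the empty intersection is a reasonable point of care but does not change anything.
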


The converse assertion to Corollary~\ref{corollary:singularities-of-curves}
holds in arbitrary dimension.

\begin{lemma}[{\cite[Corollary 2.14]{PrzyalkowskiShramov-Weighted}}]
\label{lemma:smooth-vs-quasismooth}
Let $X\subset\PP$ be a smooth well formed weighted complete intersection.
Then $X$ is quasi-smooth.
\end{lemma}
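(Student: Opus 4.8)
The plan is to prove that the affine cone $C_X=V(f_1,\dots,f_k)\subset\AA$ is smooth at every point $P\neq 0$. I would first record the bookkeeping: by Lemma~\ref{lemma:WCI-connected} the variety $X$ is connected, hence, being smooth, irreducible, so $C_X$ is an irreducible cone of dimension $\dim X+1$; thus it has codimension exactly $k$ in $\AA$ and, being a complete intersection, is Cohen--Macaulay, and it is smooth at $P$ precisely when the Jacobian $\bigl(\partial f_\ell/\partial x_j\bigr)(P)$ has rank $k$. Now fix $P\neq 0$, choose $i$ with $x_i(P)\neq 0$, and rescale by the $\CC^*$-action (which preserves both $C_X$ and smoothness of a point) so that $x_i(P)=1$. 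The Euler relations $\sum_j a_j x_j\,\partial f_\ell/\partial x_j=d_\ell f_\ell$, evaluated at $P$ where all $f_\ell$ vanish, express the $i$-th column of the Jacobian as a linear combination of the others; hence the rank of the Jacobian at $P$ equals that of the Jacobian of $f_1|_{x_i=1},\dots,f_k|_{x_i=1}$ in the coordinates $x_j$, $j\neq i$. So $C_X$ is smooth at $P$ if and only if the affine slice $Y:=C_X\cap\{x_i=1\}$, which is again cut out by $k$ equations and has codimension $k$ in $\{x_i=1\}\cong\AA^N$ since it is a hyperplane section of the irreducible cone $C_X$, is smooth at $P$.

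The slice $Y$ carries a linear action of the group $\mu_{a_i}$ of $a_i$-th roots of unity, restricted from the $\CC^*$-action, with $Y/\mu_{a_i}\cong X\cap\{x_i\neq 0\}$ and finite quotient map $q$. For a well formed $\PP$ the singular locus $\mathrm{Sing}\,\PP$ is exactly the union of those coordinate strata along which the $\CC^*$-stabilizer is nontrivial (see~\cite{IF00}), and since no coordinate hyperplane can be such a stratum (the gcd of any $N$ weights is $1$), $\mathrm{Sing}\,\PP$ has codimension $\ge 2$ in $\PP$. As $X$ is well formed, $X\cap\mathrm{Sing}\,\PP$ has codimension $\ge 2$ in $X$, so $q^{-1}\bigl(X\cap\mathrm{Sing}\,\PP\bigr)$ has codimension $\ge 2$ in $Y$; off this preimage $\mu_{a_i}$ acts freely on $Y$, so $q$ is étale there and $Y$ is smooth because $X$ is. Thus $Y$ is regular in codimension $1$; being Cohen--Macaulay it satisfies Serre's condition $S_2$, so by Serre's criterion $Y$ is normal.

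It remains to prove that $Y$ is smooth at a point $P$ lying over $\bar Q:=q(P)\in X\cap\mathrm{Sing}\,\PP$ — off $\mathrm{Sing}\,\PP$ the map $q$ is already étale and there is nothing to do. This is the step I expect to be the main obstacle. Let $\mu_d=G_P\subset\mu_{a_i}$ be the stabilizer of $P$, with $d>1$; étale-locally $(X,\bar Q)\cong(Y,P)/\mu_d$ is smooth, so if $Y$ were smooth at $P$ the Chevalley--Shephard--Todd theorem would force $\mu_d$ to act on $T_PY$ through pseudo-reflections. The point is to turn this around: one knows that $T_PY$ is the kernel of the $\mu_d$-equivariant map given by the Jacobian of $f_1|_{x_i=1},\dots,f_k|_{x_i=1}$, hence a $\mu_d$-subrepresentation of the $N$-dimensional ambient tangent space whose character is the ambient character minus the characters of the semi-invariants $f_\ell|_{x_i=1}$; that $\mu_d$ acts nontrivially on at least two ambient coordinate directions (if $d$ divided the $N$ weights other than $a_i$ as well, then together with $d\mid a_i$ this would contradict well formedness); and that $Y$ is a normal complete intersection. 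Combining these should rule out a drop of rank of the Jacobian at $P$ that stays invisible after passing to the quotient by $\mu_d$, giving that $Y$, hence $C_X$, is smooth at $P$. An alternative and perhaps cleaner packaging of this last step is to prove directly that a smooth well formed weighted complete intersection is disjoint from $\mathrm{Sing}\,\PP$; once that is known, $\O_X(1)$ is an honest ample line bundle on $X$ and $C_X\setminus\{0\}\to X$ is its associated $\CC^*$-torsor, hence smooth.
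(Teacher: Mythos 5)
You should note at the outset that the paper does not prove this lemma at all: it is imported from~\cite[Corollary~2.14]{PrzyalkowskiShramov-Weighted}, so the comparison is with that reference rather than with an argument in the present text. Judged on its own terms, the first two thirds of your plan are sound: the Euler-relation step identifying the rank of the Jacobian of the cone at $P$ (normalized so that $x_i(P)=1$) with the rank of the Jacobian of the slice $Y=C_X\cap\{x_i=1\}$, the identification $Y/\mu_{a_i}\cong X\cap\{x_i\neq 0\}$, smoothness of $Y$ on the locus where $\mu_{a_i}$ acts freely (there $q$ is \'etale and $X$ is smooth), and normality of $Y$ from $R_1$ plus Cohen--Macaulayness are all correct, and this is exactly where well-formedness of $\PP$ and of $X$ enter. (A small technical point you still owe: to go back from smoothness of the variety $Y$ to rank $k$ of the Jacobian of $f_1|_{x_i=1},\ldots,f_k|_{x_i=1}$ you need the scheme-theoretic slice to be reduced at $P$, or an equivalent local-algebra argument.)

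The genuine gap is the step you yourself flag as the main obstacle: smoothness of $Y$ at a point with nontrivial stabilizer is asserted, not proved, and the sketch offered does not close it. Smoothness of the quotient never by itself forces smoothness of a normal source: $\{xy=z^2\}\subset\AA^3$ with $\mu_2$ acting by $(x,y,z)\mapsto(x,y,-z)$ is a normal complete intersection with quotient $\AA^2$, yet it is singular; and in the paper's own Cheltsov--Prokhorov example the quotient of the (non-normal) slice is smooth while quasi-smoothness fails. So any completion must use, at this very step, the fact that the non-free locus of $\mu_{a_i}$ on $Y$ lies over $X\cap\mathrm{Sing}\,\PP$ and hence has codimension at least $2$, and it must use it through an actual theorem rather than the character bookkeeping you describe; your Chevalley--Shephard--Todd remark runs in the wrong direction (it constrains the action \emph{assuming} $Y$ is smooth), and ``combining these should rule out a drop of rank'' is not an argument. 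The standard way to finish is Zariski--Nagata purity of the branch locus: $q\colon Y\to X\cap\{x_i\neq 0\}$ is finite with normal source and regular target, its ramification locus is exactly the non-free locus and has codimension at least $2$, hence is empty; therefore $q$ is \'etale everywhere, $Y$ is smooth, and your Euler-relation step gives smoothness of $C_X$ at $P$. Your alternative packaging (show directly that a smooth well formed weighted complete intersection misses $\mathrm{Sing}\,\PP$) is merely a restatement of the same unproved step, not a way around it.
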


If the weighted projective space $\PP$ is not well formed,
then the assertion of Lemma~\ref{lemma:smooth-vs-quasismooth}
may fail.

\begin{example}
The hypersurface $X$ in $\PP=\PP(1,2^n)$ with 
coordinates
$x_0,\ldots,x_n$ given by equation
$$
x_0^2x_1+x_2^2+\ldots+x_n^2=0
$$
is not quasi-smooth because the cone over it
in $\mathbb{A}^{n+1}$ is singular at the point~\mbox{$(0,1,0,\ldots,0)$}.
On the other hand, $X$ is
isomorphic to a quadric
in $\PP^n\cong \PP(1,2^n)$ with homogeneous coordinates $z_0,\ldots,z_n$
given by equation
$$
z_0z_1+z_2^2+\ldots+z_n^2=0,
$$
and thus it is smooth.
\end{example}

The reader may wonder if there exists a
smooth (but not well formed)
weighted complete intersection~$X$ in a well formed
weighted projective space~$\PP$ such that~$X$
is not quasi-smooth. The following example suggested to us by
I.\,Cheltsov and Yu.\,Prokhorov shows that the answer is positive.

\begin{example}
Let $X$ be a hypersurface in the well formed weighted projective
space~\mbox{$\PP=\PP(2,3,5^n)$} with coordinates
$x_0,\ldots,x_{n+1}$ given by equation $x_0^3=x_1^2$.
Obviously, it is not quasi-smooth (and not well formed).
We claim that $X$ is smooth. Indeed, it is enough to check
this in the neighborhood of the subset
defined by equations $x_0=x_1=0$. This subset
is covered by pairwise isomorphic affine charts given by $x_i=1$.
So consider the affine chart $U\subset\PP$
where $x_{n+1}=1$. This chart is a quotient of
the affine space $\AA^{n+1}$ with coordinates $u_0,\ldots,u_n$
by the group $\ZZ/5\ZZ$ whose generator multiplies the coordinates $u_0$ and
$u_1$ by $\varepsilon^2$ and $\varepsilon^3$, respectively,
where $\varepsilon$ is a non-trivial root of unity of degree~$5$,
and acts trivially  on the remaining coordinates.
The intersection~\mbox{$X\cap U$} is isomorphic to the quotient of
the subset of $\AA^{n+1}$ defined by equation~\mbox{$u_0^3=u_1^2$}.
The algebra of invariants of the above action on $\AA^{n+1}$
is generated by the functions
$$
u_0^5, u_1^5, u_0u_1, u_2,\ldots,u_n.
$$
Denoting them by $v_0,\ldots,v_{n+1}$, we see that
$U\cong\AA^{n+1}/(\ZZ/5\ZZ)$ is isomorphic to a hypersurface
given by equation $v_0v_1=v_2^5$ in the affine space $\AA^{n+2}$
with coordinates $v_0,\ldots,v_{n+1}$, and $X\cap U$ is isomorphic to
a subvariety of $\AA^{n+2}$ given by equations
$$
v_0-v_2^2=v_1-v_2^3=0.
$$
Obviously, these equations define a smooth variety.
\end{example}

Although the assertion of Corollary~\ref{corollary:singularities-of-curves} fails in dimensions
larger than~$1$, one can still show that singularities of quasi-smooth well formed weighted
complete intersections are relatively nice.

\begin{proposition}
\label{proposition:log-terminal}
Let~$X$ be a quasi-smooth
well formed weighted complete intersection.
Then $X$ is normal and has quotient singularities. In particular, the singularities of $X$ are log terminal.
\end{proposition}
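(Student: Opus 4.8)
The plan is to realize $X$ locally, in the standard affine charts of $\PP$, as the quotient of a smooth affine variety by a finite cyclic group; both normality and log terminality then follow from standard properties of quotient singularities, so the real content is the statement about quotient singularities.

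First I would recall the chart description of a weighted projective space (see~\cite{Do82}). Identifying the affine hyperplane $\{x_i=1\}\subset\AA$ with $\AA^N$, with coordinates $x_0,\dots,\widehat{x_i},\dots,x_N$, the restriction of the projection $\pi$ induces an isomorphism between $\AA^N/\mu_{a_i}$ and the chart $D_+(x_i)=\{x_i\neq 0\}\subset\PP$, where $\mu_{a_i}$ denotes the group of $a_i$-th roots of unity, acting on $\AA^N$ by $x_j\mapsto\zeta^{a_j}x_j$. Under this isomorphism $X\cap D_+(x_i)$ is identified with $Y_i/\mu_{a_i}$, where $Y_i=C_X\cap\{x_i=1\}$ is the corresponding affine slice of the cone. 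Since the charts $D_+(x_i)$ cover $\PP$, it is enough to prove that each $Y_i$ is smooth.

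This is the heart of the matter, and it is exactly where quasi-smoothness is used. The slice $Y_i$ lies in $\{x_i\neq 0\}$, so it avoids the vertex of the cone, and hence $C_X$ is smooth along $Y_i$. Moreover, the coordinate function $x_i$, restricted to $C_X$, is a submersion at every point of the open subset $\{x_i\neq 0\}\subset C_X$: at such a point $q$ the derivative of $x_i$ along the $\CC^*$-orbit of $q$ equals $a_i x_i(q)\neq 0$, and this orbit is contained in $C_X$, so the differential of $x_i|_{C_X}$ is nonzero on the tangent space $T_q C_X$. Therefore $Y_i=(x_i|_{C_X})^{-1}(1)$ is a smooth subvariety of $C_X$, of dimension $\dim C_X-1=\dim X$. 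Consequently $X\cap D_+(x_i)\cong Y_i/\mu_{a_i}$ is the quotient of a smooth variety by a finite group, so $X$ has quotient singularities.

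Finally I would invoke two standard facts to finish: a variety with quotient singularities is normal (locally, it is the spectrum of the ring of invariants of a regular ring, which is integrally closed), and quotient singularities are log terminal, which is classical. Alternatively, once one knows $X$ has quotient singularities, hence is Cohen--Macaulay, normality can be deduced from Serre's criterion together with Proposition~\ref{proposition:singularities-of-X}, since $X\cap\mathrm{Sing}\,\PP$ has codimension at least $2$ in $X$ by well-formedness. The one genuinely non-formal step is the submersivity of $x_i$ on $C_X$ and the resulting smoothness of the slices $Y_i$; everything else is bookkeeping with the chart description of $\PP$.
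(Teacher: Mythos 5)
Your proposal is correct and follows the same route as the paper, which simply cites \cite[\S6]{IF00} for the fact that quasi-smoothness implies quotient singularities (whence normality) and \cite[Proposition~1.7]{Kaw84} for log terminality of quotient singularities. Your slice argument --- smoothness of $Y_i=C_X\cap\{x_i=1\}$ via the $\CC^*$-orbit direction, and the identification $X\cap D_+(x_i)\cong Y_i/\mu_{a_i}$ --- is a sound self-contained verification of exactly the fact the paper delegates to \cite{IF00}, and it correctly uses only quasi-smoothness there, reserving well-formedness for the (optional) Serre-criterion variant.
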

\begin{proof}
Since $X$ is quasi-smooth, it has quotient singularities,
see for instance~\cite[\S6]{IF00}. Hence $X$ is normal.
Furthermore, quotient singularities are
log terminal by~\cite[Proposition~1.7]{Kaw84}.
\end{proof}

The divisorial sheaf $\O_\PP(1)$ is not necessary a line bundle.
The description of all line bundles on $\PP$ is given by the following
assertion, see \cite[Proposition 8]{RoTe12} or
the proof of~\mbox{\cite[Theorem 3.2.4(i)]{Do82}}.

\begin{proposition}
\label{proposition:Cartier}
Let~\mbox{$\PP=\PP(a_0,\ldots,a_N)$} be a well formed weighted
projective space.
Then the Picard group $\Pic(\PP)$ is a free group generated by $\O_\PP(l)$,
where $l$ is the least common multiple of the weights~$a_i$.
\end{proposition}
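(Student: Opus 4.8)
The plan is to compute the class group $\Cl(\PP)$ first and then to carve out $\Pic(\PP)$ inside it as the subgroup of Cartier classes. Since the case $N=0$ is trivial ($\PP$ is a point), assume $N\ge 1$, so that the origin is a point of codimension at least two in $\AA\cong\AA^{N+1}$ and hence $\Cl(\AA\setminus\{0\})=0$. Realising $\PP=(\AA\setminus\{0\})/\CC^*$ as a geometric quotient, a rank-one reflexive sheaf on $\PP$ pulls back to a rank-one reflexive --- hence trivial --- sheaf on $\AA\setminus\{0\}$ carrying a $\CC^*$-linearisation, and linearisations of the trivial sheaf are classified by the character group of $\CC^*$; this gives $\Cl(\PP)\cong\ZZ$ with $\O_\PP(1)$ corresponding to the identity character (see also~\cite{Do82}). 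In particular every divisorial sheaf on $\PP$ is isomorphic to some $\O_\PP(m)$, and $\Pic(\PP)$ is exactly the set of $m\in\ZZ\cong\Cl(\PP)$ for which $\O_\PP(m)$ is locally free.

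I would then test local freeness chart by chart on the affine cover $U_i=\{x_i\neq 0\}$. Each $U_i$ is isomorphic to $\AA^N/\mu_{a_i}$, where $\mu_{a_i}\subset\CC^*$ is the stabiliser of the slice $\{x_i=1\}$ acting diagonally on the remaining coordinates $x_j$ ($j\neq i$) with weights $a_j$ taken modulo $a_i$. Here is where well formedness is used: this action has no quasi-reflections, because if some $\zeta\in\mu_{a_i}$ fixed all coordinates but $x_{j_0}$, then the order of $\zeta$ would divide both $a_i$ and $\gcd_{j\neq i,\,j\neq j_0}a_j$, hence divide $\gcd_{j\neq j_0}a_j=1$; the same computation with no exceptional index shows that $\mu_{a_i}$ acts faithfully. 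Consequently $\Cl(U_i)\cong\widehat{\mu_{a_i}}\cong\ZZ/a_i\ZZ$ by the standard description of the class group of a quotient of affine space by a finite group acting without quasi-reflections, and the restriction map $\Cl(\PP)\to\Cl(U_i)$ sends $\O_\PP(1)$ to a generator.

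It remains to see which of the sheaves $\O_\PP(m)$ restrict to Cartier classes on each $U_i$. Unwinding definitions, $\O_\PP(m)|_{U_i}$ is the $\mu_{a_i}$-invariant direct image of the trivial line bundle on $\AA^N$ twisted by the character $\zeta\mapsto\zeta^{m}$; such an equivariant line bundle descends to a genuine line bundle on $\AA^N/\mu_{a_i}$ precisely when the character is trivial on the stabiliser of every point --- in particular on the stabiliser of the origin, which is all of $\mu_{a_i}$ --- so $\O_\PP(m)|_{U_i}$ is locally free if and only if $a_i\mid m$. Running over all $i$, we conclude that $\O_\PP(m)$ is a line bundle if and only if $a_i\mid m$ for every $i$, that is, if and only if $l=\lcm(a_0,\ldots,a_N)$ divides $m$. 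Hence $\Pic(\PP)$ is the subgroup $l\ZZ$ of $\Cl(\PP)\cong\ZZ$, which is free of rank one and generated by $\O_\PP(l)$.

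The main obstacle I anticipate is the middle step: checking cleanly that $U_i\cong\AA^N/\mu_{a_i}$ with an action free in codimension one, and extracting both $\Cl(U_i)\cong\ZZ/a_i\ZZ$ and the precise behaviour of the restriction map $\Cl(\PP)\to\Cl(U_i)$ from this. The first and third steps are, respectively, a standard class-group computation and a short equivariant-descent argument. An alternative, more computational route would bypass class groups entirely and instead check directly that the graded module $(S_{x_i})_m$ is free of rank one over $(S_{x_i})_0$ exactly when $a_i\mid m$; this is elementary but less transparent.
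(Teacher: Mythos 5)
Your proof is correct, but it takes a genuinely different route from the paper, which in fact gives no argument of its own for Proposition~\ref{proposition:Cartier}: it simply refers to \cite[Proposition 8]{RoTe12} (a toric/linear-algebra identification of $\Pic$ inside $\Cl$) and to the proof of \cite[Theorem 3.2.4(i)]{Do82}, and the latter is essentially the ``more computational route'' you mention at the end, namely checking chart by chart that $(S_{x_i})_m$ is a free $(S_{x_i})_0$-module of rank one exactly when $a_i\mid m$. Your route --- first $\Cl(\PP)\cong\ZZ$ generated by $\O_\PP(1)$ via the $\CC^*$-quotient presentation, then $U_i\cong\AA^N/\mu_{a_i}$ with a faithful action free in codimension one (your gcd computation is exactly where well-formedness enters), then a descent criterion to decide which $\O_\PP(m)|_{U_i}$ are invertible --- is more conceptual and makes the role of well-formedness transparent; what it costs is reliance on two standard but nontrivial inputs (the class group of a linear quotient by a small finite group equals its character group, and Kempf-type equivariant descent), plus two small points worth making explicit. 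First, since $\pi\colon\AA\setminus\{0\}\to\PP$ is not flat, the pullback in your opening step should be the reflexive (equivalently, Weil-divisor) pullback. Second, the ``only if'' half of your descent step deserves a sentence: if $\O_\PP(m)|_{U_i}$ were invertible, the natural map from its pullback to the $\chi^m$-twisted trivial bundle on $\AA^N$ is an isomorphism off codimension two, hence everywhere, so the character must be trivial on the stabilizer of the origin; alternatively, a nontrivial class in $\Cl(U_i)$ stays nontrivial in the class group of the local ring at the image of the origin (\cite[Corollary~10.3]{Fo73}), so it cannot be Cartier there. With those lines added, your argument is a complete self-contained proof, which the paper's citation is not.
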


For a weighted complete intersection $X$ of multidegree $(d_1,\ldots,d_k)$
in~$\P(a_0,\ldots,a_N)$, define
\begin{equation*}
i_X=\sum a_j-\sum d_i.
\end{equation*}
Let $\omega_X$ be the dualizing sheaf on $X$.

\begin{theorem}[{see~\cite[Theorem 3.3.4]{Do82}, \cite[6.14]{IF00}}]
\label{theorem:adjunction}
Let $X$ be a quasi-smooth
well formed weighted complete intersection.
Then
$$
\omega_X\cong\O_X\left(-i_X\right).
$$
\end{theorem}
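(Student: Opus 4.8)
The plan is to reduce the statement to the classical adjunction formula applied on the smooth locus of $X$, and then to extend the resulting isomorphism over all of $X$ by a reflexivity argument that uses well formedness. I will freely use that for the well formed weighted projective space $\PP=\PP(a_0,\ldots,a_N)$ one has $\omega_\PP\cong\O_\PP(-\sum a_j)$ as divisorial sheaves. This is standard and is contained in the references cited for the theorem; alternatively it follows from the affine cone $\pi\colon\AA\setminus\{0\}\to\PP$, since the generator $dx_0\wedge\cdots\wedge dx_N$ of $\omega_{\AA\setminus\{0\}}$ has weight $\sum a_j$ for the $\CC^*$-action and therefore descends to $\O_\PP(-\sum a_j)$.

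I would then work over the open set $\PP^{\mathrm{sm}}=\PP\setminus\mathrm{Sing}\,\PP$. A smooth point of a normal variety is locally factorial, so the divisorial sheaf $\O_\PP(m)$ restricts to a line bundle on $\PP^{\mathrm{sm}}$ for every $m$. By Proposition~\ref{proposition:singularities-of-X} the subset $X^{\mathrm{sm}}:=X\setminus\mathrm{Sing}\,\PP$ is precisely the smooth locus of $X$, and it is a smooth closed subvariety of the smooth variety $\PP^{\mathrm{sm}}$, cut out there by the restrictions of the regular sequence $f_1,\ldots,f_k$ defining $X$, where $f_j$ is a section of $\O_\PP(d_j)$. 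Consequently the conormal sheaf of $X^{\mathrm{sm}}$ in $\PP^{\mathrm{sm}}$ is isomorphic to $\bigoplus_{j=1}^{k}\O_{X^{\mathrm{sm}}}(-d_j)$, and the adjunction formula combined with $\omega_\PP\cong\O_\PP(-\sum a_j)$ gives
\[
\omega_{X^{\mathrm{sm}}}\;\cong\;\omega_{\PP^{\mathrm{sm}}}|_{X^{\mathrm{sm}}}\otimes\bigotimes_{j=1}^{k}\O_{X^{\mathrm{sm}}}(d_j)\;\cong\;\O_{X^{\mathrm{sm}}}(-i_X),
\]
because $i_X=\sum a_j-\sum d_j$.

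It remains to extend this isomorphism over $X$. By Lemma~\ref{lemma:WCI-connected} and Proposition~\ref{proposition:log-terminal} the variety $X$ is irreducible, normal, and Cohen--Macaulay, so its dualizing sheaf $\omega_X$ is reflexive; the divisorial sheaf $\O_X(-i_X)$ is reflexive as well. Well formedness of $X$ says exactly that $\mathrm{codim}_X(X\setminus X^{\mathrm{sm}})=\mathrm{codim}_X(X\cap\mathrm{Sing}\,\PP)\ge 2$, and two reflexive sheaves on a normal variety that become isomorphic on the complement of a closed subset of codimension at least two are isomorphic (push forward along the open immersion $j\colon X^{\mathrm{sm}}\hookrightarrow X$). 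Since $\omega_X|_{X^{\mathrm{sm}}}=\omega_{X^{\mathrm{sm}}}$ and $\O_X(-i_X)|_{X^{\mathrm{sm}}}=\O_{X^{\mathrm{sm}}}(-i_X)$, the displayed isomorphism upgrades to $\omega_X\cong\O_X(-i_X)$.

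The step I expect to require the most care is the second one: verifying that $f_1,\ldots,f_k$ indeed restrict to a regular sequence on $\PP^{\mathrm{sm}}$ with conormal sheaf $\bigoplus_j\O(-d_j)$, and that $\O_X(-i_X)$ is genuinely reflexive so that the codimension-two extension is legitimate. An alternative that sidesteps passing to $\PP$ is to compute $\omega_{C_X}$ directly: the affine cone $C_X\subset\AA^{N+1}$ is a complete intersection, hence Gorenstein, so $\omega_{C_X}\cong\omega_{\AA^{N+1}}|_{C_X}\otimes\det N_{C_X/\AA^{N+1}}$ is invertible and carries the $\CC^*$-linearization of weight $\sum_j a_j-\sum_j d_j=i_X$; descending this along $C_X\setminus\{0\}\to X$ and extending over the image of the vertex (again of codimension at least two) yields $\omega_X\cong\O_X(-i_X)$ once more.
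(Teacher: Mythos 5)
The paper does not actually prove this statement: it is quoted as known, with references to \cite[Theorem~3.3.4]{Do82} and \cite[6.14]{IF00}, so there is no internal proof to compare against. Your argument is correct and is essentially the standard argument behind those references: quasi-smoothness together with Proposition~\ref{proposition:singularities-of-X} identifies $X\setminus\mathrm{Sing}\,\PP$ with the smooth locus of $X$, where all the divisorial sheaves $\O(m)$ are invertible and ordinary adjunction for the regular embedding cut out by $f_1,\ldots,f_k$ yields $\omega\cong\O(-i_X)$, while well formedness enters exactly where you invoke it --- in $\omega_\PP\cong\O_\PP\left(-\sum a_j\right)$ and in the codimension-two bound that lets you extend the isomorphism of reflexive sheaves over $X\cap\mathrm{Sing}\,\PP$. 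The only imprecision is in your optional affine-cone variant: the vertex of $C_X$ has no image in $X$ under $C_X\setminus\{0\}\to X$, so the codimension-two extension there is really over $X\cap\mathrm{Sing}\,\PP$, i.e. over the locus where $\CC^*$ acts on $C_X\setminus\{0\}$ with nontrivial stabilizers; this does not affect your main argument.
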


Theorem~\ref{theorem:adjunction} allows to understand the basic properties
of weighted complete intersections if the weights of the weighted projective spaces and the degrees of the defining equations are known. We illustrate
this by the following observation. Recall that a variety $X$ is called
\emph{rationally connected} if for two general points $P_1,P_2\in X$
there is a rational curve on $X$ passing through $P_1$ and $P_2$.
A variety is called \emph{uniruled} if it is covered by rational curves.

\begin{proposition}\label{proposition:rational-curves}
Let $X\subset\PP=\PP(a_0,\ldots,a_N)$ be a quasi-smooth well formed weighted
complete intersection. The following assertions hold.
\begin{itemize}
\item[(i)] If $i_X>0$, then $X$ is rationally connected.

\item[(ii)] If $i_X\le 0$ and $i_X$ is divisible by
all weights $a_i$,
then $X$ is not uniruled.

\item[(iii)] If $i_X=0$, then $X$ is not uniruled.
\end{itemize}
\end{proposition}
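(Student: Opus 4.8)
The plan is to read everything off Theorem~\ref{theorem:adjunction}, which identifies $\omega_X$ with $\O_X(-i_X)$, together with the fact (Proposition~\ref{proposition:log-terminal}) that $X$ has log terminal, in particular $\QQ$-factorial klt, singularities, so that standard birational-geometry criteria apply. Note first that $\O_\PP(1)$ restricted to $X$ is ample: indeed $X$ is a weighted complete intersection, hence positive-dimensional and projective with Picard rank controlled by Proposition~\ref{proposition:Cartier}, and some positive multiple $\O_X(l)$ is a genuine ample line bundle. So the sign of $i_X$ governs the sign of $K_X$.

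For part~(i): if $i_X>0$, then $-K_X\sim_{\QQ} i_X\cdot\O_X(1)$ is ample, so $X$ is a (klt) Fano variety, and klt Fano varieties are rationally connected by the theorem of Zhang (and Hacon--McKernan), building on Campana and Koll\'ar--Miyaoka--Mori in the smooth case. Hence $X$ is rationally connected. For part~(ii): if $i_X\le 0$ and $i_X$ is divisible by every $a_i$, then in particular $i_X$ is divisible by $l=\lcm(a_i)$, so by Proposition~\ref{proposition:Cartier} the divisorial sheaf $\O_X(-i_X)\cong\omega_X$ is an honest line bundle, namely $\O_X(l)^{\otimes(-i_X/l)}$, which is nef (it is a nonnegative multiple of an ample line bundle). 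Thus $K_X$ is nef, and a variety with nef canonical class is not uniruled — this is the standard consequence of Miyaoka--Mori's theorem that uniruled varieties carry a covering family of rational curves $C$ with $K_X\cdot C<0$; one only needs $X$ normal and projective, which holds by Proposition~\ref{proposition:log-terminal}. Part~(iii) is the special case $i_X=0$: then $\omega_X\cong\O_X$ is trivial, in particular $K_X$ is (numerically) trivial hence nef, so again $X$ is not uniruled by Miyaoka--Mori; here no divisibility hypothesis is needed because $-i_X=0$ is automatically divisible by every weight.

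The only genuinely substantive inputs are therefore external theorems — rational connectedness of klt Fanos for~(i), and the Miyaoka--Mori characterization of uniruledness for~(ii) and~(iii) — so the main point to get right is that the hypotheses of those theorems are met, i.e.\ that $X$ is normal, projective, and (for~(i)) $\QQ$-Gorenstein klt with $-K_X$ $\QQ$-Cartier and ample. All of this is supplied by Theorem~\ref{theorem:adjunction} and Proposition~\ref{proposition:log-terminal}; the potential obstacle is the usual subtlety that $\O_X(1)$ is only a $\QQ$-Cartier divisorial sheaf rather than a line bundle, but since $\O_X(l)$ is Cartier and ample (Proposition~\ref{proposition:Cartier}), $\O_X(1)$ is $\QQ$-Cartier and ample, which is all the cited criteria require. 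I would write the three parts in the order (iii), (ii), (i) if brevity were the goal, since (iii) is a degenerate case of (ii) and (i) needs the extra Fano input, but the stated order is equally fine.
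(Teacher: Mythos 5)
Part~(i) is fine and coincides with the paper's argument (log terminal Fano, rational connectedness by Zhang). The gap is in your treatment of~(ii) and~(iii). You apply the Miyaoka--Mori uniruledness criterion directly to the singular variety $X$, asserting that ``a variety with nef canonical class is not uniruled; one only needs $X$ normal and projective.'' That principle is false at this level of generality: uniruledness is a birational notion, and the criterion controls the canonical class of a \emph{smooth} model, so to use it you must compare $K_X$ with $K_{\widetilde{X}}$ on a resolution $\pi\colon\widetilde{X}\to X$, i.e.\ you must know that the discrepancies are non-negative. The paper's own Example~\ref{example:Prokhorov} is a warning sign: a log terminal rational (hence uniruled) surface with \emph{ample} $\QQ$-Cartier $K_X$. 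Even demanding $K_X$ Cartier does not rescue the bare statement: contracting a smooth plane cubic $E$ with $\widetilde{E}^2<0$ on a blow-up of $\PP^2$ at $\ge 10$ points of $E$ yields a normal projective \emph{rational} surface with Gorenstein (simple elliptic) singularities and $K_X\equiv 0$, so nef Cartier canonical class coexists with uniruledness once the singularities are worse than canonical. So the step ``$K_X$ nef $\Rightarrow$ not uniruled'' needs a justification that you have not supplied, and normality plus projectivity cannot supply it.

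The missing idea --- and the heart of the paper's proof of~(ii) --- is exactly where the divisibility hypothesis and log terminality interact: divisibility of $i_X$ by all the $a_i$ makes $\O_X(-i_X)$ a genuine line bundle (Proposition~\ref{proposition:Cartier}), so $K_X$ is an effective Cartier divisor and $X$ is Gorenstein; Gorenstein together with log terminal forces all discrepancies to be non-negative integers, i.e.\ the singularities are canonical; hence on a resolution $K_{\widetilde{X}}=\pi^*K_X+(\text{effective})$ is effective, and \emph{then} Miyaoka--Mori (applied to the smooth $\widetilde{X}$) gives non-uniruledness of $\widetilde{X}$ and so of $X$. Your write-up mentions log terminality at the outset but never uses it in~(ii)--(iii); inserting the ``Gorenstein $+$ log terminal $\Rightarrow$ canonical $\Rightarrow$ effective canonical class on a resolution'' chain is precisely what closes the gap. (Two minor points: log terminal does not imply $\QQ$-factorial, though nothing in your argument depends on that; and your reduction of~(iii) to~(ii) is the same as the paper's and is fine once~(ii) is proved correctly.)
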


\begin{proof}
We know from Proposition~\ref{proposition:log-terminal} that
$X$ has log terminal singularities.

Suppose that $i_X>0$. Then the anticanonical
divisor $-K_X$ is ample by Theorem~\ref{theorem:adjunction}.
Therefore, assertion~(i) holds by~\cite{Zhang-RC}.

Now suppose that $i_X\le 0$ and $i_X$ is divisible by
all weights $a_i$.
Then the sheaf $\O_{\PP}(-i_X)$ is a line bundle by Proposition~\ref{proposition:Cartier}. Thus by
Theorem~\ref{theorem:adjunction}
the canonical class $K_X$ is an effective Cartier divisor.
In particular, the singularities of $X$ are Gorenstein, so that all discrepancies of $X$ are integers. Since the singularities of $X$ are also
log terminal, we conclude that the discrepancies are non-negative, which means that the singularities of $X$ are actually canonical.
Hence there exists a resolution of singularities $\widetilde{X}\to X$
such that the canonical divisor~$K_{\widetilde{X}}$ is effective.
This implies that $\widetilde{X}$ (and thus also $X$) is not uniruled,
see~\cite{MiyaokaMori}, and thus proves assertion~(ii).

Assertion~(iii) follows from assertion~(ii).
\end{proof}

We do not know if the divisibility assumption in
Proposition~\ref{proposition:rational-curves}(ii)
is inevitable. Note that without this assumption
Theorem~\ref{theorem:adjunction} does not imply that
the singularities of $X$ are Gorenstein. On the other hand, a variety with
non-Gorenstein log terminal singularities and
ample canonical class may be uniruled, as one can see from
the following example that was pointed out to us by Yu.\,Prokhorov and the anonymous referee.

\begin{example}
\label{example:Prokhorov}
Let $C\subset \PP^2$ be a nodal plane rational curve of degree $d$ (say, a general projection
of a rational normal curve of degree $d$). Then $C$ has $m=\frac{(d-1)(d-2)}{2}$ nodes.

Let $f\colon \widetilde{X}\to \PP^2$ be the blow up of the nodes, let $E_1,\ldots, E_m$ be the exceptional curves,
and let $E=\sum_{i=1}^m E_i$. Denote by $\widetilde{C}$ the proper  transform of $C$ on $\widetilde{X}$.
One has
$$
\widetilde{C}\sim f^*C-2E,
$$
so that
$$
\widetilde{C}^2= C^2+4E=-d^2+6d-4.
$$
Hence $\widetilde{C}^2<0$ for $d\ge 6$.
Now assume that $d> 6$ and denote $\widetilde{d}=\widetilde{C}^2$.
Since $\widetilde{C}$ is smooth and rational, there exists a contraction $g\colon \widetilde{X}\to X$
of $\widetilde{C}$ to a surface $X$ with a unique singular point.
The singular point of $X$ is locally isomorphic to the quotient of $\AA^2$ by a cyclic group of order $\widetilde{d}$ acting with
weights $(1,1)$; in particular, it is log terminal.

We have
$$
K_{\widetilde{X}} \sim f^* K_{\PP^2}+E\sim_{\QQ} -\frac{3}{d}f^*C+E\sim_{\QQ} -\frac{3}{d}\widetilde{C}+ \left(1-\frac{6}{d}\right)E.
$$
Thus after the contraction we get
$$
K_X\sim g_*K_{\widetilde X}\sim_{\QQ} \left(1-\frac{6}{d}\right)\sum_{i=1}^m g_*E_i.
$$
Since $d>6$, we conclude that $K_X$ is effective.

Denote $L_i=g_*E_i$ and $L=\sum_{i=1}^m L_i$,
so that
$$
K_X\sim_{\QQ} \left(1-\frac{6}{d}\right)L.
$$
Write
$E_i=g^*L_i-\alpha \widetilde{C}$.
Then
$$
2=E_i\cdot \widetilde{C}=-\alpha \widetilde{C}^2=-\alpha \widetilde{d}.
$$
By the projection formula one has
$$
L_i\cdot L=g^*L_i\cdot E= \left(E_i+\alpha \widetilde{C}\right)\cdot E=
-1+2\alpha m=\frac{d^2}{d^2-6d+4}>0.
$$
In particular, we see that $L^2>0$. Moreover, we claim that any irreducible effective curve $F$ on $X$ has positive intersection with $L$.
Indeed, this holds for $F=L_i$ by the inequality above.
Assume that $F\neq L_i$ for all $i$.
If $F$ intersects $L_i$ for some $i$, then its intersection with $L$ is positive.
Thus, we may assume that $F$ does not intersect $L_i$ for any $i$. In particular, it does not pass through the point $g(\widetilde{C})$. This means that
the proper transform $\widetilde F$ of $F$ on $\widetilde X$ does not intersect $\widetilde C$.
Also, it does not intersect $E_i$ for any $i$. Therefore, $f(\widetilde{F})$ does not intersect
$C$, which is absurd.
This, by Nakai--Moishezon criterion, shows that the class of $L$, and hence also $K_X$, is ample.
Finally, note that $X$ is rational by construction, so that in particular it is uniruled.
\end{example}

It appears that the divisor class group of a quasi-smooth well formed
weighted complete intersection has nice properties.

\begin{theorem}[{cf. \cite[Remark~4.2]{Okada2}, \cite[Proposition~2.3]{PST17}}]
\label{theorem:Okada}
Let~$X$ be a quasi-smooth
well formed weighted complete intersection.
Suppose that either $\dim X\ge 2$, or $X$ is a rational curve.
Then the divisor class group $\Cl(X)$ has no torsion.
Moreover, if $\dim(X)\ge 3$, then the group $\Cl(X)\cong \ZZ$ is generated by the class of~\mbox{$\mathcal{O}_X(1)$}.
\end{theorem}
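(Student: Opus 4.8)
The plan is to reduce the statement to a computation of the divisor class group of the affine cone $C_X\subset\AA$. First I would observe that $X$ is irreducible by Lemma~\ref{lemma:WCI-connected} and normal by Proposition~\ref{proposition:log-terminal}, and that $C_X$ is a complete intersection, hence Cohen--Macaulay; being smooth away from the origin (quasi-smoothness) and of dimension $\dim X+1\ge 2$, it is regular in codimension one, hence normal. Set $U=C_X\setminus\{0\}=\pi^{-1}(X)$ and let $q=\pi|_U\colon U\to X$ be the quotient by the $\CC^*$-action. Since $\{0\}$ has codimension at least $2$ in $C_X$, restriction gives $\Cl(C_X)\cong\Cl(U)=\Pic(U)$. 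The key input is then the exact sequence
$$
0\longrightarrow\ZZ\xrightarrow{\ 1\mapsto[\mathcal{O}_X(1)]\ }\Cl(X)\xrightarrow{\ q^*\ }\Cl(C_X)\longrightarrow 0,
$$
which I would establish along the lines of~\cite[Remark~4.2]{Okada2} and~\cite[Proposition~2.3]{PST17}. The pullback $q^*$ is defined because, $X$ being well formed, no prime divisor of $X$ is contained in $\mathrm{Sing}\,X=X\cap\mathrm{Sing}\,\PP$ (Proposition~\ref{proposition:singularities-of-X}), so $q$ is an honest $\CC^*$-bundle near the generic point of every prime divisor; it is surjective, and its kernel consists exactly of the classes $[\mathcal{O}_X(m)]$, by standard properties of quotients by $\CC^*$. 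Well-formedness then forces $[\mathcal{O}_X(m)]=m[\mathcal{O}_X(1)]$, because $\mathcal{O}_X(m)$ and the reflexive hull of $\mathcal{O}_X(1)^{\otimes m}$ agree outside $X\cap\mathrm{Sing}\,\PP$, a set of codimension at least $2$. Finally $[\mathcal{O}_X(1)]$ has infinite order, since $l[\mathcal{O}_X(1)]=[\mathcal{O}_X(l)]$ and $\mathcal{O}_X(l)=\O_\PP(l)|_X$ is an ample line bundle for $l$ the least common multiple of the weights (Proposition~\ref{proposition:Cartier}).

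For $\dim X\ge 3$ one has $\dim C_X\ge 4$, so $C_X$ is a local complete intersection whose singular locus $\{0\}$ has codimension at least $4$. By Grothendieck's factoriality theorem for local complete intersections (applied to the local ring at the vertex, with regularity automatic elsewhere) $C_X$ is locally factorial, so $\Cl(C_X)=\Pic(C_X)$; and since $C_X$ carries a $\CC^*$-action contracting it onto the vertex, $\Pic(C_X)=0$. The exact sequence then gives $\Cl(X)=\ZZ\cdot[\mathcal{O}_X(1)]$.

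For $\dim X=2$ we have $\dim C_X=3$ and it suffices to see that $\Cl(X)$ has no torsion. Since $C_X$ is a cone, $U$ deformation retracts onto the link of its isolated complete intersection singularity, which is simply connected by Hamm's theorem (the link of such a singularity of dimension $n$ is $(n-2)$-connected, and here $n=3$); thus $\pi_1(U)=1$. A nonzero torsion class in $\Pic(U)$, say of exact order $m$, would give via its cyclic covering a connected \'etale $\ZZ/m$-cover of $U$, which is impossible; hence $\Pic(U)=\Cl(C_X)$ is torsion free. Then a torsion class of $\Cl(X)$ maps to $0$ in $\Cl(C_X)$, hence lies in the subgroup $\ZZ$ and is therefore trivial, so $\Cl(X)$ has no torsion. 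Finally, if $X$ is a rational curve, then $X$ is smooth by Corollary~\ref{corollary:singularities-of-curves}, so $X\cong\PP^1$ and $\Cl(X)=\ZZ$ has no torsion.

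The main obstacle is the exact sequence itself, and above all the verification that $\ker q^*$ is no larger than $\ZZ\cdot[\mathcal{O}_X(1)]$; this is precisely where well-formedness is used, and where the arguments of~\cite{Okada2} and~\cite{PST17} carry the weight. Once that is in place, the two dimension ranges follow by invoking, respectively, Grothendieck--Lefschetz factoriality and Hamm's connectivity theorem for links of isolated complete intersection singularities.
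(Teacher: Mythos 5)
Your proposal is correct and follows the same skeleton as the paper: both arguments hinge on the exact sequence $0\to\ZZ\to\Cl(X)\to\Cl(C_X)\to 0$, with $1\mapsto[\mathcal{O}_X(1)]$, and then control the class group of the affine cone. The paper takes this sequence from \cite[Theorem~1.6]{Wa81}, while you sketch it and defer to \cite{Okada2}, \cite{PST17}; since those references rest on the same Demazure--Watanabe mechanism, this is an equivalent amount of borrowed input, and your use of well-formedness (to identify $[\mathcal{O}_X(m)]$ with $m[\mathcal{O}_X(1)]$ and to get infinite order from ampleness of $\mathcal{O}_X(l)$) matches the intended argument. The differences are in the second half. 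For $\dim X\ge 3$ the paper first localizes at the vertex via \cite[Corollary~10.3]{Fo73} and then applies Grothendieck's factoriality of local complete intersections from \cite[\S 18]{Fo73}; you use the same factoriality statement but stay global on $C_X$, which forces the extra claim $\Pic(C_X)=0$. That claim is true, but your appeal to the contracting $\CC^*$-action tacitly uses homotopy invariance of $\Pic$, which needs (semi)normality of $C_X$ --- harmless here since $C_X$ is normal --- or can simply be replaced by the same graded-to-local comparison $\Cl(C_X)\cong\Cl(R_{\mathfrak{m}})$ that the paper uses; as written this step is terse but not wrong. For $\dim X=2$ the routes genuinely diverge: the paper quotes Robbiano's algebraic result \cite{Ro76} that $\Pic$ of the punctured spectrum is torsion free, whereas you deduce torsion-freeness of $\Pic(U)$ topologically from Hamm's $(n-2)$-connectivity of links of isolated complete intersection singularities (here $n=3$, so the link, onto which $U$ retracts by the positive-weight scaling flow, is simply connected) together with the cyclic-cover argument. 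This is a valid and geometrically transparent alternative, at the cost of invoking Hamm's theorem and the equivalence between finite \'etale and finite topological covers. The rational curve case and the final diagram chase agree with the paper.
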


We reproduce the proof of~\cite[Remark 4.2]{Okada2} with some modification suggested to us by T.\,Okada.

\begin{proof}[Proof of Theorem~\ref{theorem:Okada}]
If $X$ is a rational curve, then it is smooth by Corollary~\ref{corollary:singularities-of-curves}, so the assertion is obvious.
Suppose that $\dim X\ge 2$.
Let $C_X\subset\AA$ be the affine cone over~$X$, and let
$R$ be the (graded) coordinate algebra of $C_X$.
We have the exact sequence
\[\xymatrix{
0 \ar[r] & \ZZ \ar[r]^(.35){\theta} & \Cl(X)\ar[r] & \Cl(R) \ar[r] & 0,
}\]
where $\theta$ sends $1$ to a divisor class $\mathcal{O}_{X}(1)$, see e.\,g.~\cite[Theorem~1.6]{Wa81}.

Let $\mathfrak{m}$ be the maximal ideal of the origin of $C_X$.
Then by~\cite[Corollary~10.3]{Fo73}, one has $\Cl(R)\cong \Cl(R_{\mathfrak{m}})$.
If $\dim (X)\ge 3$, then $R_\mathfrak{m}$ is a complete intersection local ring of dimension at least $4$
which is regular outside the maximal ideal, so that $\Cl(R_{\mathfrak{m}})=0$, see~\cite[\S18]{Fo73}.
Thus $\Cl(X)\cong \ZZ$ is generated by the class of $\mathcal{O}_X(1)$.

Now assume that $\dim (X)=2$.
Set
$$
U = \mathrm{Spec}(R_{\mathfrak{m}}) \setminus \{\mathfrak{m}\}.
$$
By~\mbox{\cite[Proposition~18.10(b)]{Fo73}}, we have an isomorphism
$\Pic (U) \cong \Cl (R_{\mathfrak{m}})$. Finally, by
assertion (ii) of the main theorem of~\cite{Ro76},
the group $\Pic(U)$ is torsion free. It follows that
$$
\Cl(R) \cong \Cl(R_{\mathfrak{m}}) \cong\Pic(U)
$$
has no torsion, and hence the same holds for $\Cl(X)$.
\end{proof}

\begin{remark}\label{remark:curve-torsion}
The assertion of Theorem~\ref{theorem:Okada} obviously fails in the case when $X$ is a curve of positive genus.
\end{remark}

\section{Automorphisms}
\label{section:automorphisms}

In this section we collect several auxiliary results on automorphism groups of weighted
complete intersections.

Note that every weighted projective space $\PP$ is a Fano variety (with log terminal singularities).
Thus $\Aut(\PP)$ is a linear algebraic group. Alternatively, one can deduce this from the fact that
$\PP$ is a projective toric variety.
The following assertion is well known to experts.

\begin{proposition}\label{proposition:Aut-well-formed-WPS}
Suppose that the weighted projective space
$$
\PP=\PP(a_0^{r_0},\ldots,a_M^{r_M}),\quad a_0<\ldots<a_M,
$$
is well formed.
Let $R_U$ be the unipotent radical of the group $\Aut(\P)$, so that
$$
\Aut(\PP)\cong R_U\rtimes\Aut_{\mathrm{red}}(\PP),
$$
where the subgroup $\Aut_{\mathrm{red}}(\PP)$
is reductive.
Then $R_U$ consists of the automorphisms
\begin{multline}\label{eq:unipotent}
(x_{0,1}:\ldots:x_{0,r_0}:\ldots:x_{i,p}:\ldots:x_{M,1}:\ldots:x_{M,r_M})\mapsto\\
\mapsto
(x_{0,1}:\ldots:x_{0,r_0}:\ldots:x_{i,p}+\Phi_{i,p}:\ldots:x_{M,1}+\Phi_{M,1}:
\ldots:x_{M,r_M}+\Phi_{M,r_M}),
\end{multline}
where $x_{0,1},\ldots,x_{M,r_M}$ are weighted homogeneous coordinates on $\PP$, and
$\Phi_{i,p}$ is a weighted homogeneous polynomial of degree $a_i$ in the variables
$x_{0,1},\ldots,x_{i-1,r_{i-1}}$.
On the other hand, one has
\begin{equation}\label{eq:reductive}
\Aut_{\mathrm{red}}\big(\PP\big) \cong \big(\GL_{r_0}(\CC) \times \ldots \times \GL_{r_M}(\CC)\big)/\CC^*,
\end{equation}
where $\CC^*$ embeds into the above product by
\begin{equation}\label{eq:Gm-embedding}
t \mapsto (t^{a_0}{\mathrm{Id}}_{r_0},\ldots,t^{a_M}{\mathrm{Id}}_{r_M}),
\end{equation}
and $\mathrm{Id}_r$ denotes the identity $r\times r$-matrix.
Furthermore, for an appropriate choice of
weighted homogeneous coordinates $x_{0,1},\ldots,x_{M,r_M}$
the $(i+1)$-th factor in~\eqref{eq:reductive} acts by linear transformations of
the coordinates $x_{i,1},\ldots,x_{i,r_i}$.
\end{proposition}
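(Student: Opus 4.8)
The plan is to reduce the statement to a description of the group $\Aut_{\mathrm{gr}}(R(\PP))$ of graded $\CC$-algebra automorphisms of the Cox ring $R(\PP)=\CC[x_{0,1},\ldots,x_{M,r_M}]$, and then to read off the unipotent radical and a reductive complement directly from the ordering $a_0<\ldots<a_M$ of the distinct weights. I expect the main obstacle to be the first step --- checking that every automorphism of $\PP$ lifts to a graded automorphism of $R(\PP)$ and identifying the kernel of such a lift with a specific one-dimensional torus. This is the one place where well formedness is genuinely used, and it requires some care with the reflexive sheaves $\O_\PP(d)$; after it, everything is linear algebra organised by the chain of distinct weights.

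First I would establish a natural exact sequence $1\to\CC^*\to\Aut_{\mathrm{gr}}(R(\PP))\to\Aut(\PP)\to1$. For surjectivity, note that, $\PP$ being well formed (and toric), the group $\Cl(\PP)\cong\ZZ$ is generated by the class of $\O_\PP(1)$ (see~\cite{Do82}); since $-K_\PP$ is a positive multiple of this class and $\phi^*K_\PP=K_\PP$ for every automorphism $\phi$ of $\PP$, the pullback $\phi^*$ fixes the class of $\O_\PP(1)$. Choosing an isomorphism $\phi^*\O_\PP(1)\cong\O_\PP(1)$ and passing to reflexive tensor powers gives compatible isomorphisms $\phi^*\O_\PP(d)\cong\O_\PP(d)$ for all $d\ge0$, and letting $\phi$ act on $R(\PP)=\bigoplus_{d\ge0}H^0(\PP,\O_\PP(d))$ produces a graded ring automorphism lifting $\phi$. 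The only freedom is the scalar chosen on $\O_\PP(1)$, and unwinding it identifies the kernel of $\Aut_{\mathrm{gr}}(R(\PP))\to\Aut(\PP)$ with the subgroup $\{x_{i,p}\mapsto t^{a_i}x_{i,p}\mid t\in\CC^*\}$, i.e.\ the image of~\eqref{eq:Gm-embedding}. Thus $\Aut(\PP)\cong\Aut_{\mathrm{gr}}(R(\PP))/\CC^*$.

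Second I would describe $\Aut_{\mathrm{gr}}(R(\PP))$ explicitly. A graded automorphism $\phi$ must send each $x_{i,p}$ to a weighted homogeneous polynomial of degree $a_i$, and since $a_0<\ldots<a_M$ every such polynomial is a $\CC$-linear combination of $x_{i,1},\ldots,x_{i,r_i}$ together with monomials of degree at least $2$ in the variables $x_{0,1},\ldots,x_{i-1,r_{i-1}}$ of smaller weight. Hence $\phi(x_{i,p})=\sum_q c_{i,p,q}x_{i,q}+\Phi_{i,p}$, where $C_i=(c_{i,p,q})$ is an $r_i\times r_i$ matrix and $\Phi_{i,p}$ is a weighted homogeneous polynomial of degree $a_i$ in the variables $x_{0,1},\ldots,x_{i-1,r_{i-1}}$ (so $\Phi_{0,p}=0$). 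With respect to the filtration of $R(\PP)$ by the weights of the generators, $\phi$ is block upper triangular with diagonal blocks $C_i$, and a short induction on the weight --- solving for the $x_{i,p}$ one weight at a time --- shows that $\phi$ is invertible precisely when every $C_i$ lies in $\GL_{r_i}(\CC)$. In particular $\Aut_{\mathrm{gr}}(R(\PP))$ is an affine algebraic group, isomorphic as a variety to the product of $\GL_{r_0}(\CC)\times\ldots\times\GL_{r_M}(\CC)$ with the affine space parametrising the polynomials $\Phi_{i,p}$.

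Finally I would extract the group structure. Let $N\subset\Aut_{\mathrm{gr}}(R(\PP))$ be the subgroup of those $\phi$ with all $C_i=\mathrm{Id}_{r_i}$: these are exactly the transformations~\eqref{eq:unipotent}. It is normal, since conjugation multiplies the diagonal blocks like a homomorphism to $\GL_{r_0}(\CC)\times\ldots\times\GL_{r_M}(\CC)$ and so preserves the condition $C_i=\mathrm{Id}_{r_i}$; and it is unipotent, being an iterated extension of vector groups (filter $N$ by the subgroups on which $\Phi_{i,p}=0$ for all $i\le k$, with successive subquotients the vector spaces of the $\Phi_{k,p}$). The subgroup $L$ of those $\phi$ with all $\Phi_{i,p}=0$ is the block-linear group $\GL_{r_0}(\CC)\times\ldots\times\GL_{r_M}(\CC)$ acting on the coordinates; it meets $N$ trivially, and $L\cdot N=\Aut_{\mathrm{gr}}(R(\PP))$ because every $\phi$ equals $\ell\cdot(\ell^{-1}\phi)$ with $\ell\in L$ its block-linear part. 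Thus $\Aut_{\mathrm{gr}}(R(\PP))=N\rtimes L$ with $N$ unipotent normal and $L$ reductive, so $N$ is the unipotent radical. The central $\CC^*$ from~\eqref{eq:Gm-embedding} lies in $L$, meets $N$ trivially, and commutes with $N$ (each $\Phi_{i,p}$ being weighted homogeneous of degree $a_i$); dividing out therefore yields $\Aut(\PP)=R_U\rtimes\Aut_{\mathrm{red}}(\PP)$ with $R_U$ the image of $N$ --- which is~\eqref{eq:unipotent} --- and $\Aut_{\mathrm{red}}(\PP)\cong L/\CC^*$, which is~\eqref{eq:reductive}; the factor $\GL_{r_i}(\CC)$ of $L$ acts, by construction, by linear transformations of the coordinates $x_{i,1},\ldots,x_{i,r_i}$, giving the last assertion. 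If one wishes to replace $L$ by an arbitrary reductive complement, it is conjugate to $L$ by the conjugacy of maximal reductive subgroups of a linear algebraic group, which amounts to a linear change of the weighted homogeneous coordinates.
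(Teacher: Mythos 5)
Your argument is correct, and at its core it follows the same strategy as the paper: identify $\Aut(\PP)$ with the group of graded automorphisms of the polynomial ring $R(\PP)$ modulo the one-parameter subgroup \eqref{eq:Gm-embedding}, and then read off the unipotent radical and a Levi factor from the block-triangular shape forced by $a_0<\ldots<a_M$. The difference lies in how the identification is justified: the paper simply invokes \cite[Proposition~A.2.5]{ProkhorovShramov-JCr3}, where $\Aut(\PP)$ is realized as the quotient by $\CC^*$ of the normalizer of the weight torus in the stabilizer of the origin in $\Aut(\AA)$, i.e.\ of the graded automorphism group of the Cox ring, and then declares the computation of the unipotent radical straightforward; you instead prove the reduction directly, lifting an automorphism of $\PP$ through the ample generator of $\Cl(\PP)\cong\ZZ$ and its reflexive powers, and then carry out the ``straightforward'' computation in full. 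This buys a self-contained proof and makes visible exactly where well-formedness enters. The one place where you are briefer than you should be is the identification of the kernel of $\Aut_{\mathrm{gr}}(R(\PP))\to\Aut(\PP)$ with the torus \eqref{eq:Gm-embedding}: the remark that the lift is unique up to a scalar on $\O_\PP(1)$ only shows that the lifts of a fixed automorphism form a single $\CC^*$-coset, not that every graded automorphism inducing the identity on $\PP$ arises as such a lift. This is easily repaired, and is again where well-formedness is used: a graded automorphism acting trivially on $\PP$ acts on the locus of $\AA\setminus\{0\}$ where the weighted $\CC^*$-action is free by $v\mapsto\lambda(v)\,v$, with $\lambda$ invertible, regular, and constant on orbits; it descends to an open subset of $\PP$ whose complement has codimension at least $2$ precisely by well-formedness, hence extends to the normal projective variety $\PP$ and is constant, so the automorphism lies in \eqref{eq:Gm-embedding}. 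With that step supplied, your description of $N$, of the complement $L\cong\GL_{r_0}(\CC)\times\ldots\times\GL_{r_M}(\CC)$, and the passage to the quotient by the central $\CC^*$ establish all assertions of the proposition.
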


\begin{proof}
The assertion about the reductive part of $\Aut(\PP)$
can be found in~\mbox{\cite[Proposition~A.2.5]{ProkhorovShramov-JCr3}}.
The assertion about the unipotent part
can also be deduced from the proof of \cite[Proposition~A.2.5]{ProkhorovShramov-JCr3}. Namely,
one can see that~\mbox{$\Aut(\PP)$}
is isomorphic to the quotient $\widetilde{\Aut}(\PP)/\CC^*$,
where $\CC^*$ is the torus whose action on
$$
\AA=\AA^{r_0+\ldots+r_M}
$$
is defined by the weights
$a_0,\ldots,a_M$, and $\widetilde{\Aut}(\PP)$ is the normalizer of~$\CC^*$ in the stabilizer of the point
$0\in\AA$ in $\Aut(\AA)$. In particular, $R_U$ is isomorphic to the unipotent radical of $\widetilde{\Aut}(\PP)$.
Furthermore, the group $\widetilde{\Aut}(\PP)$ is isomorphic
to the group of graded automorphisms of the Cox ring of $\PP$,
which can be identified with the coordinate algebra of~$\AA$ with the grading
defined by the weights $a_0,\ldots,a_M$.
The latter is just the polynomial algebra in~\mbox{$r_0+\ldots+r_M$} variables with the grading
defined by the weights. Now finding the unipotent radical
of~\mbox{$\widetilde{\Aut}(\PP)$} is straightforward.
\end{proof}

Note that Proposition~\ref{proposition:Aut-well-formed-WPS} fails without the well-formedness assumption, as shown by an example
of a weighted projective line
$$
\PP=\PP(1,2)\cong\PP^1.
$$

To work with reductive subgroups of automorphism
groups of weighted projective spaces, we will need
the following auxiliary assertion.

\begin{lemma}\label{lemma:choice-of-coordinates}
Let $\PP=\PP(a_0^{r_0},\ldots,a_M^{r_M})$,
where $a_0<\ldots<a_M$, be a well formed
weighted projective space.
Let $\Delta$ be a reductive subgroup
of $\Aut(\PP)$. Then
one can choose the weighted homogeneous coordinates
$x_{0,1},\ldots,x_{M,r_M}$ on $\PP$ so that
$\Delta$ is contained in a subgroup
$\Aut_{\mathrm{red}}(\PP)$ of $\Aut(\PP)$
described in Proposition~\ref{proposition:Aut-well-formed-WPS}.
\end{lemma}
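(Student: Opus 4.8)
The statement is a standard conjugation lemma: any reductive subgroup of a linear algebraic group can be conjugated into a maximal reductive (Levi) subgroup. The plan is to exploit the structure of $\Aut(\PP)$ given by Proposition~\ref{proposition:Aut-well-formed-WPS}, namely the semidirect product decomposition $\Aut(\PP)\cong R_U\rtimes\Aut_{\mathrm{red}}(\PP)$ with $R_U$ the unipotent radical. By Mostow's theorem on the existence and conjugacy of Levi decompositions (valid over $\CC$, which has characteristic zero), any two maximal reductive subgroups of a linear algebraic group are conjugate by an element of the unipotent radical; equivalently, every reductive subgroup is contained in some conjugate of a fixed Levi factor. Concretely, I would argue as follows: since $\Delta$ is reductive, the composition $\Delta\hookrightarrow\Aut(\PP)\twoheadrightarrow\Aut(\PP)/R_U$ has kernel a unipotent normal subgroup of $\Delta$, which must be trivial, so $\Delta$ maps isomorphically onto its image in the quotient. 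By Mostow's conjugacy theorem there is $g\in R_U$ with $g\Delta g^{-1}\subseteq\Aut_{\mathrm{red}}(\PP)$.

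The remaining point is to translate this conjugation into a change of weighted homogeneous coordinates. The element $g\in R_U$ is, by the explicit description \eqref{eq:unipotent} in Proposition~\ref{proposition:Aut-well-formed-WPS}, an automorphism of $\PP$ of the triangular form $x_{i,p}\mapsto x_{i,p}+\Phi_{i,p}$ with $\Phi_{i,p}$ weighted homogeneous of degree $a_i$ in the coordinates of strictly smaller weight. Such a $g$ is induced by a graded automorphism $\widetilde g$ of the Cox ring $\CC[x_{0,1},\ldots,x_{M,r_M}]$ (this is exactly the identification used in the proof of Proposition~\ref{proposition:Aut-well-formed-WPS}), and $\widetilde g$ is invertible precisely because it is unipotent-triangular with respect to the weight filtration. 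Setting $x'_{i,p}=\widetilde g^{-1}(x_{i,p})$ gives a new system of weighted homogeneous coordinates on $\PP$; in these new coordinates the subgroup $\Aut_{\mathrm{red}}(\PP)$ of Proposition~\ref{proposition:Aut-well-formed-WPS} is replaced by its conjugate $g^{-1}\Aut_{\mathrm{red}}(\PP)\,g$, and $\Delta = g^{-1}(g\Delta g^{-1})g$ now lies inside it. Since the $(i+1)$-th factor of \eqref{eq:reductive} acts linearly on $x_{i,1},\ldots,x_{i,r_i}$ for the original coordinates, the same holds after this coordinate change for the conjugated group, so the new coordinates do exhibit $\Delta$ inside an $\Aut_{\mathrm{red}}(\PP)$ of the stated type.

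I expect the only genuine obstacle to be citing and applying the conjugacy of Levi subgroups in exactly the right form. One must be slightly careful because $\Aut(\PP)$ need not be connected (the factors $\GL_{r_i}(\CC)$ are connected, but one should confirm connectedness of $\Aut(\PP)$, which follows from \eqref{eq:reductive} and the fact that $R_U$ is a unipotent group, hence connected); with connectedness in hand, Mostow's theorem applies directly. Alternatively, one can avoid invoking the general theorem and instead run the filtration argument by hand: conjugating $\Delta$ step by step, first killing the $\Phi$-components landing in the top weight piece, then the next, and so on, using reductivity to split off each graded piece — but this is really just a reproof of Mostow in the special triangular situation, so I would prefer the clean reference. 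Everything else is routine bookkeeping with the explicit coordinate description.
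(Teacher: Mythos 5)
Your proposal is correct and follows essentially the same route as the paper, which simply cites the facts that every reductive subgroup of a linear algebraic group lies in a maximal reductive subgroup and that all maximal reductive subgroups are conjugate (Onishchik--Vinberg, Theorems~6.4.4 and~6.4.5), concluding that $\Delta$ is conjugate into $\Aut_{\mathrm{red}}(\PP)$. Your extra discussion of the conjugating element lying in $R_U$ and the explicit translation of the conjugation into a change of weighted homogeneous coordinates is just a spelled-out version of what the paper leaves implicit, and it is fine.
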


\begin{proof}
Every reductive subgroup of a linear algebraic group is contained in a maximal
reductive subgroup, and all maximal reductive subgroups are conjugate to each other;
see for instance \cite[Theorems~6.4.4 and~6.4.5]{OnishikVinberg}.
Thus $\Delta$ is conjugate to a subgroup of~\mbox{$\Aut_{\mathrm{red}}(\PP)$}.
\end{proof}

We are going to show that certain subgroups of
$\Aut(\PP)$ act faithfully on weighted complete intersections in~$\PP$.

\begin{lemma}\label{lemma:Aut-not-cone-reductive}
Let $\PP$ be a well formed weighted projective space,
and let $X\subset\PP$ be an irreducible positive-dimensional
weighted complete intersection which is not an intersection with a linear
cone.
Let $\Delta$ be a reductive subgroup of $\Aut(\PP)$ that fixes every point of~$X$.
Then $\Delta$ is trivial.
\end{lemma}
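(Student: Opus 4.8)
The plan is to reduce to a statement about the coordinates of $\PP$ using Remark \ref{remark:coordinate}, and then exploit reductivity to linearize the action. First I would invoke Lemma \ref{lemma:choice-of-coordinates} to choose weighted homogeneous coordinates $x_{0,1},\dots,x_{M,r_M}$ on $\PP=\PP(a_0^{r_0},\dots,a_M^{r_M})$ in which $\Delta$ is contained in $\Aut_{\mathrm{red}}(\PP)$ as described in Proposition \ref{proposition:Aut-well-formed-WPS}; thus $\Delta$ lifts to a subgroup $\widetilde\Delta$ of $\GL_{r_0}(\CC)\times\dots\times\GL_{r_M}(\CC)$ acting block-diagonally on the affine space $\AA$, where the $(i+1)$-th block acts linearly on the span $V_i$ of $x_{i,1},\dots,x_{i,r_i}$. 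The hypothesis is that the induced action of $\Delta$ on $X$ is trivial, i.e.\ every element $g\in\Delta$ fixes $X$ pointwise.

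The key step is the following: for each $i$ and each $g\in\widetilde\Delta$, I claim the linear map $g$ acts on $V_i$ as a scalar. To see this, fix $g$ and look at its action on the coordinates of smallest weight, $x_{0,1},\dots,x_{0,r_0}$. Since $g$ fixes $X$ pointwise, for any point of $X$ the ratio $(x_{0,1}:\dots:x_{0,r_0}:\dots)$ is unchanged; in particular $g(x_{0,p})$ and $x_{0,p}$ must agree up to a common scalar when restricted to $X$. But $g(x_{0,p})-\lambda x_{0,p}$ is a weighted homogeneous polynomial of degree $a_0$ (the minimal weight) vanishing on $X$. By Remark \ref{remark:coordinate}, since $X$ is not an intersection with a linear cone, the only weighted homogeneous polynomials of degree $a_0$ vanishing on $X$ are zero (there are no coordinates of smaller weight for them to depend on). Hence $g$ acts on $V_0$ by the scalar $\lambda=\lambda_0(g)$. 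Proceeding by induction on $i$: suppose $g$ acts by scalars on $V_0,\dots,V_{i-1}$. The restriction of each $x_{i,p}$ to $X$ is nonzero (again Remark \ref{remark:coordinate}), and comparing ratios on $X$ forces $g(x_{i,p})|_X = \mu(g)\, x_{i,p}|_X$ for some common scalar $\mu(g)$ independent of $p$ — here one uses that $g$ is a projective automorphism, so the overall rescaling factor is uniform. Then $g(x_{i,p})-\mu(g)x_{i,p}$ is weighted homogeneous of degree $a_i$ vanishing on $X$, so by Remark \ref{remark:coordinate} it depends only on the variables of weight $<a_i$, i.e.\ on $x_{0,1},\dots,x_{i-1,r_{i-1}}$; but $g(x_{i,p})$ is a linear combination of the $x_{i,q}$ only (block-diagonal action), and a nonzero linear combination of the $x_{i,q}$ cannot equal a polynomial in strictly lower-weight variables unless both sides vanish. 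Hence $g(x_{i,p})=\mu(g)x_{i,p}$ for all $p$, completing the induction.

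Therefore every $g\in\widetilde\Delta$ acts on $\AA$ by $g\cdot x_{i,p}=\lambda_i(g)\,x_{i,p}$ for scalars $\lambda_i(g)$. Such a transformation acts on $\PP$ nontrivially only if the tuple $(\lambda_0(g),\dots,\lambda_M(g))$ is not in the image of $\CC^*$ under the embedding \eqref{eq:Gm-embedding}; but then it does not fix $X$ pointwise, because $X$ is positive-dimensional and not contained in any coordinate subspace (all coordinates are nonvanishing on $X$ by Remark \ref{remark:coordinate}), so a genuinely diagonal torus element moves points of $X$. Hence the image of $g$ in $\Aut(\PP)$ is trivial, i.e.\ $\Delta$ is trivial. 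The main obstacle is the bookkeeping in the inductive step — specifically, justifying that the scalar $\mu(g)$ relating $g(x_{i,p})|_X$ to $x_{i,p}|_X$ is the same for all $p$ and is compatible across the weight levels, which is where one must carefully use that $g$ represents a well-defined projective automorphism rather than merely a linear map, together with the nonvanishing and dependence constraints from Remark \ref{remark:coordinate}.
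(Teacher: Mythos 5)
Your overall skeleton matches the paper's proof: choose coordinates via Lemma~\ref{lemma:choice-of-coordinates} so that $\Delta$ lies in $\Aut_{\mathrm{red}}(\PP)$ and acts block-diagonally, show that each weight block acts by a scalar using Remark~\ref{remark:coordinate}, and conclude by evaluating at a point of $X$ where all coordinates are nonzero, which forces the tuple of scalars into the image of~\eqref{eq:Gm-embedding}. The last step is fine (the existence of such a point uses irreducibility of $X$, each coordinate cutting out a proper closed subset).

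The gap is in the middle step, exactly at the place you call ``bookkeeping''. If $g$ fixes a point $Q\in X$ and $\hat Q$ is a lift of $Q$ to the affine cone, all you know pointwise is that $g(\hat Q)$ lies on the same $\CC^*$-orbit, i.e. $g(x_{i,p})(\hat Q)=t_Q^{a_i}\,x_{i,p}(\hat Q)$ for some $t_Q\in\CC^*$ that a priori varies with $Q$. Hence your assertion that $g(x_{0,p})-\lambda x_{0,p}$ vanishes on $X$ for a single constant $\lambda$ does not follow from ``$g$ is a projective automorphism, so the rescaling factor is uniform'': a non-scalar diagonal element of $\mathrm{PGL}_2(\CC)$ fixes both coordinate points of $\PP^1$ with different eigenvalues, so being a projective automorphism does not by itself make the factor uniform; the uniformity has to come from the geometry of $X$. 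This is precisely what the paper supplies: it projects $X$ equivariantly to $\PP_i=\operatorname{Proj}\CC[x_{i,1},\ldots,x_{i,r_i}]$, observes that the image $Y_i$ is irreducible and, by Remark~\ref{remark:coordinate}, not contained in a hyperplane, and uses that the fixed locus of an element of $\mathrm{PGL}_{r_i}(\CC)$ is a disjoint union of projectivized eigenspaces, so a connected nondegenerate $Y_i$ fixed pointwise forces the block to be scalar. Your version can be repaired in the same spirit without the projection: on the dense open subset of $X$ where all coordinates are nonzero, the weight-$a_i$ component of $\hat Q$ is an eigenvector of the block of $g$ acting on the weight-$a_i$ coordinates, with eigenvalue $t_Q^{a_i}$; since that block has only finitely many eigenvalues and $X$ is irreducible, $Q\mapsto t_Q^{a_i}$ is constant, after which your subtraction-plus-Remark~\ref{remark:coordinate} argument goes through (and also shows your induction on $i$ is unnecessary, since block-diagonality already confines $g(x_{i,p})$ to the weight-$a_i$ variables). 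As written, however, the constancy of the scalar is asserted rather than proved, and that is the heart of the lemma.
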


\begin{proof}
Write $\PP=\PP(a_0^{r_0},\ldots,a_M^{r_M})$, where $a_0<\ldots<a_M$.
By Lemma~\ref{lemma:choice-of-coordinates}
one can choose weighted homogeneous coordinates
$x_{0,1},\ldots,x_{M,r_M}$ on $\PP$ so that
the action of $\Delta$ on the coordinate $x_{i,p}$ depends
only on coordinates $x_{i,1},\ldots,x_{i,r_i}$
of the same weight~$a_i$.
This provides $\Delta$-equivariant rational projections
$$
\psi_i\colon \PP\dashrightarrow \PP_i=
\operatorname{Proj}\big(\CC\left[x_{i,1},\ldots,x_{i,r_i}\right]\big)
\cong\PP^{r_i-1}.
$$

Since $X$ is not an intersection
with a linear cone, none
of the weighted homogeneous coordinates $x_{i,p}$ vanishes on $X$, see
Remark~\ref{remark:coordinate}.
Therefore, $X$ contains points where the projections
$\psi_i$ are regular. Let $Y_i\subset\PP_i$
be the (closure of) the image $\psi_i(X)$.
Since $X$ is irreducible, we conclude (again using
Remark~\ref{remark:coordinate}) that $Y_i$ is not contained in a
hyperplane in $\PP_i$. On the other hand, the action of $\Delta$ on
$Y_i$ is trivial by assumption. Hence the action of $\Delta$ on $\PP_i$
is trivial.

Therefore, each element $\delta\in\Delta$ acts
on $\PP$ by a transformation of the form
\begin{multline*}
\delta\colon (x_{0,1}:\ldots:x_{0,r_0}:\ldots: x_{i,p}:\ldots:
x_{M,1}:\ldots:x_{M,r_M})\mapsto\\
\mapsto
(\lambda_0 x_{0,1}:\ldots:\lambda_0 x_{0,r_0}:\ldots: \lambda_i x_{i,p}:\ldots:
\lambda_M x_{M,1}:\ldots:\lambda_M x_{M,r_M}),
\end{multline*}
where $\lambda_0,\ldots,\lambda_M$ are complex numbers.
As we saw above, one can choose a point $P$ on~$X$
such that none of the coordinates $x_{0,1},\ldots,x_{M,r_M}$ vanishes
at $P$. Since $\delta(P)=P$ by assumption, we conclude that
$$
\lambda_0^{a_0}=\ldots=\lambda_M^{a_M},
$$
which in turn means that the transformation $\delta$ is trivial,
cf.~\eqref{eq:Gm-embedding}.
\end{proof}

The assertion of Lemma~\ref{lemma:Aut-not-cone-reductive} fails
without the assumption that $X$ is not an intersection with a linear
cone even in the case when $\PP\cong\PP^N$ (where this assumption
is equivalent to the requirement that $X$ is not contained in a hyperplane).

\begin{lemma}\label{lemma:unipotent-Aut-not-cone-hypersurface}
Let $\PP$ be a well formed weighted projective space,
and let $X\subset\PP$ be an irreducible positive-dimensional
weighted complete intersection of multidegree $(d_1,\ldots,d_k)$.
Suppose that one has $a_i<d_j$ for all $i$ and $j$.
Let $\Delta$ be a subgroup of the unipotent radical of
$\Aut(\PP)$ that fixes every point of $X$.
Then $\Delta$ is trivial.
\end{lemma}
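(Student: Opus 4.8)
The plan is to mimic the structure of the proof of Lemma~\ref{lemma:Aut-not-cone-reductive}, but now using the explicit description of the unipotent radical $R_U$ from Proposition~\ref{proposition:Aut-well-formed-WPS} together with the hypothesis $a_i<d_j$ for all $i,j$. Write $\PP=\PP(a_0^{r_0},\ldots,a_M^{r_M})$ with $a_0<\ldots<a_M$, and choose weighted homogeneous coordinates $x_{0,1},\ldots,x_{M,r_M}$ so that $\Delta\subset R_U$ is given by transformations of the shape~\eqref{eq:unipotent}. Since $X$ is not an intersection with a linear cone (which follows from $a_i<d_j$ for all $i,j$), Remark~\ref{remark:coordinate} applies: none of the coordinates $x_{i,p}$ vanishes on $X$, and more importantly, any weighted homogeneous polynomial of degree $a_i$ vanishing on $X$ depends only on the variables $x_{0,1},\ldots,x_{i-1,r_{i-1}}$ of strictly smaller weight.

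The key step is to show that for $\delta\in\Delta$ the polynomials $\Phi_{i,p}$ from~\eqref{eq:unipotent} all vanish identically. I would argue by induction on the weight, i.e.\ on $i$. Fix $\delta\in\Delta$ given by $x_{i,p}\mapsto x_{i,p}+\Phi_{i,p}$. The condition that $\delta$ fixes every point of $X$ means that on the affine cone $C_X$ the function $\Phi_{i,p}$ agrees with the pullback of zero after projectivizing; concretely, restricting to $C_X$ the map is the identity, so each $\Phi_{i,p}$, viewed as a regular function on $C_X$, must be divisible by the relation that identifies a point with its rescalings --- more precisely, $\Phi_{i,p}$ restricted to $X$ gives a section of $\O_X(a_i)$ that represents the zero automorphism. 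The clean way to phrase this: the difference $\delta^*x_{i,p}-x_{i,p}=\Phi_{i,p}$ must vanish on $X$ as a section of $\O_X(a_i)$, because otherwise $\delta$ moves the point $[x_{0,1}:\ldots:x_{M,r_M}]$ of $X$. Here I use that $X$ is irreducible and positive-dimensional, so a nonzero section of $\O_X(a_i)$ is not identically zero and the corresponding coordinate change is nontrivial on some point of $X$. Hence $\Phi_{i,p}$ vanishes on $X$. By Remark~\ref{remark:coordinate}, $\Phi_{i,p}$ --- which a priori is a polynomial of degree $a_i$ in $x_{0,1},\ldots,x_{i-1,r_{i-1}}$ already --- in fact lies in the ideal of $X$ restricted to those variables; but a polynomial in the variables of weight $<a_i$ that vanishes on $X$ and has degree $a_i<d_1$ must be zero, since the lowest-degree element of the ideal of $X$ has degree $d_1>a_i$. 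Therefore $\Phi_{i,p}=0$ for all $i,p$, so $\delta=\mathrm{id}$, and $\Delta$ is trivial.

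The main obstacle is making precise the assertion ``if some $\Phi_{i,p}$ is nonzero on $X$ then $\delta$ does not fix $X$ pointwise''. One has to rule out the degenerate possibility that the $\Phi_{i,p}$ conspire so that the induced projective transformation still fixes every point of $X$ even though the affine lift is nontrivial --- but this cannot happen for a unipotent element, since a nontrivial unipotent transformation of $\AA^{r_0+\cdots+r_M}$ commuting with the grading $\CC^*$-action descends to a nontrivial automorphism of $\PP$, and its fixed locus on $\PP$ is a proper closed subset, hence cannot contain the positive-dimensional irreducible $X$ unless $X$ lies in the locus cut out by the $\Phi_{i,p}$; and that locus is contained in a union of coordinate subspaces, contradicting that no coordinate vanishes on $X$. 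Spelling this out carefully (perhaps by choosing a point $P\in X$ with all coordinates nonzero, as in the proof of Lemma~\ref{lemma:Aut-not-cone-reductive}, and tracking the first index $i$ for which some $\Phi_{i,p}(P)\neq 0$) is the only delicate point; the rest is the degree count $a_i<d_j$ feeding into Remark~\ref{remark:coordinate}.
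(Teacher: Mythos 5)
Your overall route coincides with the paper's: use Proposition~\ref{proposition:Aut-well-formed-WPS} to write any $\delta\in\Delta$ in the form~\eqref{eq:unipotent}, show that the polynomials $\Phi_{i,p}$ lie in the weighted homogeneous ideal $I$ of $X$, and then kill them by the degree count, since $I$ is generated by elements of degrees $d_j>a_N\ge\deg\Phi_{i,p}$ and hence contains no nonzero element of degree $\le a_N$. That skeleton is exactly the paper's proof, and your degree count is correct.

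However, the justification you give for the key claim that each $\Phi_{i,p}$ vanishes on $X$ does not work as written. Your argument ``a nontrivial unipotent element descends to a nontrivial automorphism of $\PP$, whose fixed locus is a proper closed subset, hence cannot contain the positive-dimensional irreducible $X$'' is a non sequitur: a proper closed subset of $\PP$ can perfectly well contain $X$, and the example placed right after the lemma in the paper exhibits a nontrivial unipotent automorphism of $\PP(1^N,m)$ fixing a quasi-smooth well formed weighted complete intersection pointwise once the degree hypothesis is dropped, so no such general-position argument can succeed. Likewise, the common zero locus of the $\Phi_{i,p}$ is not in general contained in a union of coordinate subspaces. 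The genuine issue (which you correctly identify as the delicate point) is the rescaling of a lift: if $\delta$ fixes $[\xi]\in X$, one only gets $\xi_{i,p}+\Phi_{i,p}(\xi)=\lambda^{a_i}\xi_{i,p}$ for some $\lambda\in\CC^*$ depending on the point. The repair is along the lines of your parenthetical suggestion: on the dense open subset of $X$ where all coordinates are nonzero (nonempty by Remark~\ref{remark:coordinate} and irreducibility), $\Phi_{0,p}=0$ forces $\lambda^{a_0}=1$, and $\lambda$ depends algebraically on the point (using that the weights are coprime by well-formedness), so it takes finitely many values and is constant, say $\lambda\equiv\zeta$; then $\Phi_{i,p}-\left(\zeta^{a_i}-1\right)x_{i,p}$ vanishes on $X$, hence lies in $I$, hence is zero by the degree count, and since $\Phi_{i,p}$ involves only variables of weight smaller than $a_i$ this forces $\zeta^{a_i}=1$ and $\Phi_{i,p}=0$. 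The paper states the membership $\Phi_{i,p}\in I$ without dwelling on this rescaling, but it does not rely on the faulty fixed-locus reasoning; as submitted, your proposal has a genuine gap at precisely this step.
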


\begin{proof}
Let $f_1=\ldots=f_k=0$ be the equations of $X$ in $\PP$, so that $d_j=\deg f_j$.
We may assume that $a_0\le\ldots\le a_N$ and $d_1\le\ldots\le d_k$.
By assumption we have $d_1>a_N$.

We know from Proposition~\ref{proposition:Aut-well-formed-WPS}
that $\Delta$ consists of the elements of the form~\eqref{eq:unipotent}. If such an element preserves every point of $X$, then in the notation of~\eqref{eq:unipotent}
the polynomials~$\Phi_{i,p}$ must
be contained in the homogeneous ideal $I$
of $X$ in $\CC[x_0,\ldots,x_N]$. However,
the ideal~$I$ is generated by the elements $f_1,\ldots,f_k$ whose degrees are all greater than $a_N$, while the degrees of the polynomials
$\Phi_{i,p}$ do not exceed $a_N$. This means that all $\Phi_{i,p}$
must be zero polynomials, and thus the group $\Delta$ is trivial.
\end{proof}

The assertion of Lemma~\ref{lemma:unipotent-Aut-not-cone-hypersurface} fails
without the condition on the degrees.

\begin{example}
Consider a
weighted projective space $\PP=\PP(1^N,m)$
with weighted homogeneous coordinates $x_0,\ldots,x_N$,
where $N\ge 3$ and $m\ge 2$.
Let $X$ be a weighted complete intersection
in $\PP$ given by equations
$$
f_2=f_{2m}=0,
$$
where $f_2$ and $f_{2m}$ are
general weighted homogeneous polynomials in $x_i$'s
of degrees~$2$ and~$2m$, respectively.
Then $X$ is smooth and well formed, and it is a Fano variety
if~\mbox{$N\ge m+3$}.
Consider a homogeneous polynomial $g$ of degree $m-2$
in the variables~\mbox{$x_0,\ldots,x_{N-1}$}
and the (non-trivial) automorphism
$$
(x_0:\ldots:x_N) \mapsto  (x_0:\ldots:x_{N-1}:x_N+f_2g)
$$
of $\PP$.
Obviously, it acts trivially on $X$. Since there is a
$\binom{N+m-3}{N-1}$-dimensional space of polynomials of degree $m-2$
in the variables $x_0,\ldots,x_{N-1}$,
the subgroup of $\Aut(\PP)$ that fixes every point of $X$
contains a subgroup isomorphic to~\mbox{$(\CC^+)^{\binom{N+m-3}{N-1}}$}.
\end{example}

Lemma~\ref{lemma:unipotent-Aut-not-cone-hypersurface} implies the following
convenient corollary.

\begin{corollary}\label{corollary:unipotent-Aut-not-cone-hypersurface}
Let $X\subset\PP$ be a quasi-smooth well formed positive-dimensional
weighted hypersurface which is not an intersection with a linear cone.
Then
a subgroup of the unipotent radical of $\Aut(\PP)$
that fixes every point of $X$ is trivial.
\end{corollary}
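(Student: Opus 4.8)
The plan is to deduce the corollary from Lemma~\ref{lemma:unipotent-Aut-not-cone-hypersurface}. In the hypersurface case that lemma has only one substantive hypothesis, namely that the degree $d$ of the defining equation is strictly larger than every weight $a_i$; so the whole task reduces to establishing this inequality for a quasi-smooth well formed hypersurface $X\subset\PP=\PP(a_0,\ldots,a_N)$, $a_0\le\ldots\le a_N$, which is not an intersection with a linear cone. I would also note at the outset that $X$ is irreducible by Lemma~\ref{lemma:WCI-connected}, since it is quasi-smooth and positive-dimensional, so that the remaining hypotheses of Lemma~\ref{lemma:unipotent-Aut-not-cone-hypersurface} are met once the degree bound is in place.

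First I would record the elementary lower bound $d\ge 2$. A nonzero weighted homogeneous polynomial has degree either $0$ (a constant) or at least $a_0$; since the defining polynomial $f$ of $X$ is non-constant we get $d\ge a_0\ge 1$, and since $X$ is not an intersection with a linear cone we have $d\ne a_0$, hence $d\ge 2$. The same condition applied to the largest weight gives $d\ne a_N$, so it is enough to exclude the possibility $d<a_N$.

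Suppose then, for contradiction, that $d<a_N$. No monomial of weighted degree $d$ can involve a variable whose weight exceeds $d$, so $f$ does not involve $x_N$ and lies in $\CC[x_0,\ldots,x_{N-1}]$; the same is then true of each partial derivative $\partial f/\partial x_i$ for $0\le i\le N-1$, while $\partial f/\partial x_N\equiv 0$. Moreover $f$ has no constant term (that would force $d=0$) and no linear term $c_ix_i$ (that would force $a_i=d$), so it vanishes to order at least $2$ at the origin; hence $f$ and all of its partial derivatives vanish at every point of the line $L=\{x_0=\ldots=x_{N-1}=0\}\subset\AA$. Thus the affine cone $C_X$ contains $L$ and is singular along it, and since $L$ meets $\AA\setminus\{0\}$ this contradicts the quasi-smoothness of $X$. (If $\PP$ has several coordinates of top weight $a_N$, the same argument applies with $L$ replaced by the coordinate subspace they span.) We conclude that $d>a_N\ge a_i$ for all $i$, so Lemma~\ref{lemma:unipotent-Aut-not-cone-hypersurface} applies and yields the assertion. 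The only step requiring an actual idea is this last contradiction excluding $d<a_N$; the rest is bookkeeping.
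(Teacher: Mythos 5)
Your proposal is correct and follows essentially the same route as the paper: note irreducibility via Lemma~\ref{lemma:WCI-connected}, rule out $d<a_N$ by exhibiting a singular point of the affine cone on the $x_N$-axis (the paper uses the single point $P=(0:\ldots:0:1)$, you use the whole line, which is the same thing), use the linear-cone hypothesis to get $d\neq a_N$, and then apply Lemma~\ref{lemma:unipotent-Aut-not-cone-hypersurface}. The only differences are cosmetic (the unneeded remark $d\ge 2$ and the parenthetical about several top-weight coordinates).
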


\begin{proof}
Note that $X$ is irreducible by Lemma~\ref{lemma:WCI-connected}.
Let $f=0$ be the equation of $X$ in $\PP$, and set $d=\deg f$.
We may assume that~\mbox{$a_0\le\ldots\le a_N$}.

Suppose that $d<a_N$. Then
the polynomial $f$ does not depend on $x_N$.
Thus the point
$$
P=(0:\ldots:0:1)
$$
is contained in $X$.
Since $X$ is not an intersection with a linear cone, the polynomial $f$ does not contain
monomials of the form $\alpha x_i$, where $x_i$ is one of the weighted homogeneous coordinates on $\PP$, and $\alpha\in \CC$.
Hence all the partial derivatives of $f$ vanish at~$P$.
This means that $X$ is not quasi-smooth.
The obtained contradiction shows that $d\ge a_N$. Furthermore, since $X$ is not an intersection with a linear cone,
we see that $d>a_N$. Now the assertion follows from Lemma~\ref{lemma:unipotent-Aut-not-cone-hypersurface}.
\end{proof}

\section{Restriction of automorphisms}
\label{section:automorphisms-restricted}

In this section we prove Theorem~\ref{theorem:induced-action}.

\begin{proof}[Proof of Theorem~\ref{theorem:induced-action}]
We mostly follow the proof of \cite[Lemma~A.2.13]{ProkhorovShramov-JCr3}.

Denote by~$A$ the class of $\mathcal{O}_X(1)$ in the group $\Cl(X)$.
If $\dim X\ge 3$, then $A$ is an ample generator of $\Cl(X)$ by Theorem~\ref{theorem:Okada}, and
thus is invariant with respect to the group~\mbox{$\Aut(X)$}.
If the dimension of $X$ is arbitrary,
we note that the class of $\omega_X$ in $\Cl(X)$, i.e. the canonical class~$K_X$, is invariant
with respect to $\Aut(X)$.
Thus if $\dim X=2$ and~\mbox{$K_X\neq 0$}, or if~$X$ is a rational curve,
it follows from
Theorems~\ref{theorem:Okada} and~\ref{theorem:adjunction} that $A$
is invariant with respect to $\Aut(X)$ as well.
Therefore, in each case that we have to consider the ample class $A$ is invariant
with respect to $\Aut(X)$. This implies that~\mbox{$\Aut(X)$} is a linear algebraic group.

We have seen that the class $A$ of $\mathcal{O}_X(1)$ in $\Cl(X)$ is invariant with respect to
the group~\mbox{$\Aut(X)$}, and in particular with respect to $\Gamma$.
Set
\begin{equation*}
R(\PP)_m=H^0\big(\PP, \mathcal{O}_{\PP}(m)\big) \quad \text{and}\quad
R(X,A)_m=H^0\big(X, \mathcal{O}_X(mA)\big).
\end{equation*}
Then
\[
R(\PP)=\bigoplus\limits_{m=0}^{\infty} R(\PP)_m \quad \text{and}\quad
R(X,A)=\bigoplus\limits_{m=0}^{\infty} R(X,A)_m
\]
have natural structures of graded algebras.
Since $\mathcal{O}_{\PP}(1)$ and $A$ are ample, the algebras~\mbox{$R(\PP)$}
and~\mbox{$R(X,A)$} are finitely generated. One has
$$
\PP\cong\operatorname{Proj}\big(R(\PP)\big)
\quad \text{and}\quad
X\cong\operatorname{Proj}\big(R(X,A)\big).
$$
Recall that the restriction map
$$
\rho_m\colon R(\PP)_m\to R(X,A)_m
$$
is a surjection for every $m\ge 0$, see
\cite[Corollary~3.3]{PrzyalkowskiShramov-AutWCI}.

For every positive integer $K$ we define graded vector subspaces
\begin{equation*}
R(\PP)_{\le K}=\bigoplus\limits_{m\le K} R(\PP)_m\subset R(\PP)
\end{equation*}
and
\begin{equation*}
R(X,A)_{\le K}=\bigoplus\limits_{m\le K} R(X,A)_m\subset R(X,A).
\end{equation*}
Let
$$
U_m(\PP)\subset R(\PP)_m
$$
be the intersection
of $R(\PP)_m$ with the subalgebra of~\mbox{$R(\PP)$} generated
by~\mbox{$R(\PP)_{\le m-1}$}, and let
$$
U_m(X)\subset R(X,A)_m
$$
be the intersection
of $R(X,A)_m$ with the subalgebra of~\mbox{$R(X,A)$} generated
by~\mbox{$R(X,A)_{\le m-1}$}.
One has
$$
\rho_m(U_m(\PP))=U_m(X).
$$

There exists a  central extension
$\widetilde{\Gamma}$ of the group $\Gamma$ by a finite cyclic group, and an action of
$\widetilde{\Gamma}$  on $R(X,A)$ that induces the initial action of
$\Gamma$ on~$X$, see~\mbox{\cite[Lemma~A.2.11]{ProkhorovShramov-JCr3}}.
In particular, the group $\widetilde{\Gamma}$ acts on every
vector space $R(X,A)_m$.
Obviously, the subspace~$U_m(X)$ is $\widetilde{\Gamma}$-invariant.
Choose $V_m(X)\subset R(X,A)_m$ to be a $\widetilde{\Gamma}$-invariant
vector subspace such that
\begin{equation*}
U_m(X)\oplus V_m(X)= R(X,A)_m.
\end{equation*}
This is possible because the group $\Gamma$ is assumed to be reductive,
so that the group $\widetilde{\Gamma}$ is also reductive, and thus its
representation $R(X,A)_m$ is completely reducible.
Let $V_m(\PP)$ be a vector subspace
of $R(\PP)_m$ that is mapped to $V_m(X)$ isomorphically by $\rho_m$;
note that
$$
U_m(\PP)\cap V_m(\PP)=0
$$
by construction. Since $X$ is not an intersection with a linear cone,
it follows from Remark~\ref{remark:coordinate}
that
$$
R(\PP)_m=U_m(\PP)\oplus V_m(\PP).
$$
Note that for $m\gg 0$ the vector space $V_m(\PP)$ is zero.

Define the action of $\widetilde{\Gamma}$ on $V_m(\PP)$ so that
the isomorphism
$$
\rho_m\vert_{V_m(\PP)}\colon V_m(\PP)\stackrel{\sim}\longrightarrow V_m(X)
$$
is $\widetilde{\Gamma}$-equivariant.
Since the vector space $U_m(\PP)$ is the $m$-th graded component of the algebra generated by $R(\PP)_{\le m-1}$,
we obtain the action of $\widetilde{\Gamma}$ on $U_m(\PP)$ and $R(\PP)_m$
proceeding by induction on~$m$.
In other words, we identify $R(\PP)$ with the symmetric algebra of the graded vector space
$$
V(\PP)=\bigoplus\limits_{m=1}^{\infty} V_m(\PP),
$$
and the action of $\widetilde{\Gamma}$ on
$R(\PP)$ comes from its action on~$V(\PP)$.

Therefore, starting with the
$\widetilde{\Gamma}$-action on $R(X,A)$ that corresponds to the initial
$\Gamma$-action on~$X$, we have defined an action of $\widetilde{\Gamma}$
on $R(\PP)$ so that the restriction map
\begin{equation*}
\rho\colon R(\PP)\to R(X,A)
\end{equation*}
is $\widetilde{\Gamma}$-equivariant.
One can see from the construction that
the kernel of the projection~\mbox{$\widetilde{\Gamma}\to\Gamma$}
is contained in the
subgroup of $\Aut(R(\PP))$ acting as in~\eqref{eq:Gm-embedding};
this means that the action of~$\widetilde{\Gamma}$ on $\PP$
factors through $\Gamma$.
In other words, we have defined an action of $\Gamma$
on the weighted projective space
$\PP$ so that the embedding~\mbox{$X\hookrightarrow\PP$}
is $\Gamma$-equivariant.
\end{proof}

The assertion of Theorem~\ref{theorem:induced-action} fails
without the assumption that $X$ is not an intersection with a linear cone.

\begin{example}
Let $X$ be a line in $\PP=\PP^2$.
Both $X$ and $\PP^2$ have a faithful action of the alternating group
$\mathfrak{A}_5$. However, the action of $\mathfrak{A}_5$
on $X$ is not induced from its action on~$\PP^2$, because the latter comes from an irreducible
representation of~$\mathfrak{A}_5$.
Note that this representation has a quadratic invariant, which means that there
is a smooth $\mathfrak{A}_5$-invariant conic $X'$ in~$\PP^2$;
the actions of $\mathfrak{A}_5$ on $X'\cong\PP^1$ and $\PP^2$ agree with each other.
\end{example}

Also, the assertion of Theorem~\ref{theorem:induced-action} fails
for non-rational one-dimensional complete intersections
(cf. Remark~\ref{remark:curve-torsion}).

\begin{example}\label{example:elliptic-fail}
Let $C$ be a smooth cubic curve in $\PP^2$. Then the group $\Aut(C)$
contains finite subgroups of arbitrarily large order
while the stabilizer of $C$ in $\Aut(\PP^2)$ is finite.
\end{example}

Note that Theorem~\ref{theorem:induced-action}
still holds for smooth plane curves of genus greater than~$1$.
This follows from a theorem of Noether which states
that for such a curve the linear system defining an embedding
into $\PP^2$ is unique,
see for instance~\cite[Lemma~2.1]{Tyu75}
or~\mbox{\cite[Theorem~2.1]{Hartshorne-curves}}.
Similarly, Theorem~\ref{theorem:induced-action}
holds for smooth complete intersection curves
of genus greater than~$1$ in~$\PP^3$ (and also for
some smooth complete intersection curves in~$\PP^4$),
see~\mbox{\cite[Corollary~2.5 and Theorem~2.6]{CL}}.
We do not know if this is also the case for other
one-dimensional weighted complete intersections
of genus greater than~$1$. Similarly, we do not know
if Theorem~\ref{theorem:induced-action} can be generalized to the case
of weighted complete intersection surfaces with trivial canonical
class.

\section{Infinite automorphism groups}
\label{section:infinite-examples}

In this section we show by examples
that a quasi-smooth well formed Fano weighted complete intersection of arbitrary
dimension may have an infinite and even non-reductive
automorphism group.

\begin{example}
Let $a$ be any positive integer, and let $\PP=\PP(1^{N-1},a,a)$, where $N>2$. Consider the weighted hypersurface $X$ in $\PP$
given by the equation
$$
x_{N-1}x_N+F(x_0,\ldots,x_{N-2})=0,
$$
where $F$ is a general polynomial of degree $2a$ in $N-1$ variables.
Then $X$ is well formed and quasi-smooth,
and is not an intersection with a linear cone.
Furthermore,
$X$ is a Fano variety by Theorem~\ref{theorem:adjunction}.
The hypersurface $X$ is preserved by the subgroup $\CC^*\subset\Aut(\PP)$
such that the action of $t\in\CC^*$ is given by
$$
t\colon (x_0:\ldots:x_{N-2}:x_{N-1}:x_N)\mapsto (x_0:\ldots:x_{N-2}:tx_{N-1}:t^{-1}x_N).
$$
Now Lemma~\ref{lemma:Aut-not-cone-reductive}
implies (and one can also see this directly) that the
latter group acts faithfully on~$X$.
\end{example}

\begin{example}\label{example:non-reductive}
Let $a$ be any positive integer, and let $\PP=\PP(1^{N-1},a,a)$, where $N>2$. Consider the weighted hypersurface $X$ in $\PP$
given by the equation
$$
x_{N-3}x_{N-1}+x_{N-2}x_N+F(x_0,\ldots,x_{N-4})=0,
$$
where $F$ is a general polynomial of degree $a+1$ in $N-3$ variables.
Then $X$ is well formed and quasi-smooth, and is not an intersection with a linear cone.
Furthermore, $X$ is a Fano variety by Theorem~\ref{theorem:adjunction}.
Fix any polynomial $\Phi$ of degree $a-1$ in the variables~\mbox{$x_0,\ldots,x_{N-2}$}. Then there is an action of the group $\CC^+$ on $X$ such that $\alpha\in\CC^+$ acts by
$$
\alpha\colon (x_0:\ldots:x_{N-2}:x_{N-1}:x_N)\mapsto (x_0:\ldots:x_{N-2}:x_{N-1}+\alpha x_{N-2} \Phi:x_N-\alpha x_{N-3} \Phi).
$$
Thus $X$ is preserved by a subgroup $\Theta\subset\Aut(\PP)$ isomorphic to $\left(\CC^+\right)^s$, where
$$
s=\binom{a+N-3}{N-2}.
$$
One can see from Proposition~\ref{proposition:Aut-well-formed-WPS} that $\Theta$ is contained
in the unipotent radical~$R_U$ of~\mbox{$\Aut(\PP)$}.
Thus Corollary~\ref{corollary:unipotent-Aut-not-cone-hypersurface}
implies  that $\Theta$ acts faithfully on~$X$.
Now let $\Gamma\subset \Aut(\PP)$ be the (linear algebraic)
subgroup of $\Aut(\PP)$ that consists of \emph{all} automorphisms preserving~$X$.
Since $\Theta\subset \Gamma$, we see that the intersection of $\Gamma$ with~$R_U$ is non-trivial, and hence~$\Gamma$ is not reductive.
This implies that the linear algebraic
group~\mbox{$\Aut(X)$} is not reductive as well.
Indeed, otherwise by Theorem~\ref{theorem:induced-action}
the whole group $\Aut(X)$ is a quotient of $\Gamma$.
Thus the unipotent radical
of $\Gamma$ must act trivially on~$X$, which is not the case
because the action of its subgroup $\Theta$ is faithful.
\end{example}

As we have just seen, the image of the stabilizer of a quasi-smooth
well formed Fano weighted hypersurface $X\subset\PP$ under the restriction map
may be infinite. This is impossible for Calabi--Yau hypersurfaces.

\begin{proposition}\label{proposition:CY-restriction}
Let $X\subset\PP$ be a quasi-smooth
well formed weighted hypersurface with~\mbox{$i_X=0$}.
Let $\Gamma$ be the stabilizer of $X$ in $\Aut(\PP)$.
Then the image of $\Gamma$ in the group~\mbox{$\Aut(X)$} is
finite.
\end{proposition}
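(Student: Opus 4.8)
The statement concerns a Calabi--Yau weighted hypersurface $X \subset \PP$ with $i_X = 0$, and we want to show that the stabilizer $\Gamma$ of $X$ in $\Aut(\PP)$ acts on $X$ with finite image. The natural approach is to show that the image of $\Gamma$ in $\Aut(X)$ is a linear algebraic group all of whose connected components contribute nothing continuous; equivalently, that the connected component $\Gamma^\circ$ of $\Gamma$ acts trivially on $X$. Since $X$ is quasi-smooth, well formed and (by Lemma~\ref{lemma:WCI-connected}) irreducible, and since it is not an intersection with a linear cone (this must be arranged or else noted as automatic from $i_X = 0$ and well-formedness for a hypersurface --- one should check the degenerate cases), the two faithfulness lemmas of \S\ref{section:automorphisms} are available. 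Write $\PP = \PP(a_0^{r_0},\ldots,a_M^{r_M})$ and decompose $\Aut(\PP) \cong R_U \rtimes \Aut_{\mathrm{red}}(\PP)$ as in Proposition~\ref{proposition:Aut-well-formed-WPS}. By Corollary~\ref{corollary:unipotent-Aut-not-cone-hypersurface}, the intersection $\Gamma \cap R_U$ acts faithfully on $X$, and by Lemma~\ref{lemma:Aut-not-cone-reductive} a reductive subgroup of $\Gamma$ acting trivially on $X$ is trivial. So the point is \emph{not} that $\Gamma$ acts faithfully --- it does --- but that $\Gamma$ itself is finite, which forces the image to be finite.

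\textbf{Key steps.} First I would use the adjunction formula (Theorem~\ref{theorem:adjunction}): $i_X = 0$ means $\omega_X \cong \O_X$, so $K_X = 0$ and, by Proposition~\ref{proposition:rational-curves}(iii), $X$ is not uniruled. Next, the decisive input: a non-uniruled projective variety with log terminal (hence, since $K_X$ is Cartier here by Proposition~\ref{proposition:Cartier} as $i_X=0$ is divisible by all weights, canonical) singularities has $\Aut^\circ(X)$ \emph{reductive} --- indeed an abelian variety acting by translations, since $\kappa(X) = 0$ rules out affine subgroups (the general orbit of a $\GG_a$ or $\GG_m$ would be a rational curve, contradicting non-uniruledness). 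So $\Aut^\circ(X)$ is an abelian variety. But $\Aut(X)$ was shown (in the proof of Theorem~\ref{theorem:induced-action}, or directly: $A = \O_X(1)$ is $\Aut(X)$-invariant because $\Cl(X)$ has no torsion for a hypersurface with $i_X=0$ --- wait, $\dim X$ could be $1$, handle separately, but for $\dim X \ge 2$ use Theorem~\ref{theorem:Okada} and the ampleness of $A$) to be a linear algebraic group. A linear algebraic group that is also an abelian variety is finite. Hence $\Aut^\circ(X)$ is trivial, so $\Aut(X)$ is finite, and a fortiori the image of $\Gamma$ is finite. Alternatively, and perhaps more in the spirit of the paper, one can argue entirely inside $\Gamma$: $\Gamma^\circ$ is a connected linear algebraic group; if nontrivial it contains either a $\GG_m$ or a $\GG_a$; in the first case Lemma~\ref{lemma:Aut-not-cone-reductive} gives a contradiction with the fact that such a torus must act nontrivially on $X$ (its nontrivial action on $\PP$ would have to descend nontrivially, producing infinitely many automorphisms, but then $X$ is uniruled by the $\GG_m$-orbits --- contradiction); in the $\GG_a$ case use Corollary~\ref{corollary:unipotent-Aut-not-cone-hypersurface} and the same uniruledness obstruction.

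\textbf{Main obstacle.} The delicate point is ruling out a positive-dimensional \emph{reductive} (i.e. torus) part of $\Gamma^\circ$ acting on $X$: Lemma~\ref{lemma:Aut-not-cone-reductive} only says a reductive subgroup acting \emph{trivially} is trivial, so by itself it does not forbid a $\GG_m \subset \Gamma$ acting nontrivially on $X$. The honest resolution is the non-uniruledness of $X$: a faithful $\GG_m$-action (or $\GG_a$-action) on a projective variety produces, through the closures of generic orbits, a covering family of rational curves, contradicting Proposition~\ref{proposition:rational-curves}(iii). One must be slightly careful that the generic orbit is one-dimensional (true once the action is nontrivial and $X$ is irreducible) and that its closure is a rational curve (true for $\GG_m$- and $\GG_a$-orbit closures on a projective variety). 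With that in hand, $\Gamma^\circ$ has neither torus nor unipotent positive-dimensional part, hence is trivial, $\Gamma$ is finite, and the proposition follows. The remaining bookkeeping --- checking that for a hypersurface with $i_X = 0$ the hypotheses of the cited lemmas genuinely apply (in particular that $X$ is not an intersection with a linear cone, or treating that case by hand), and disposing of the case $\dim X = 1$ where $X$ is then a smooth genus-one curve and the claim is the classical finiteness of the stabilizer of a plane cubic --- is routine.
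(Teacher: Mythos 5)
Your core argument is, in substance, the paper's own: since $i_X=0$, Proposition~\ref{proposition:rational-curves}(iii) shows that $X$ is not uniruled, the image $\bar{\Gamma}$ of $\Gamma$ in $\Aut(X)$ is a linear algebraic group, and a positive-dimensional linear algebraic group acting faithfully on a projective variety sweeps out a dense open subset by closures of $\CC^*$- or $\CC^+$-orbits, i.e.\ by rational curves; hence $\bar{\Gamma}$ is finite. The only real difference is that the paper quotes this last step from Ueno's Theorem~14.1 instead of re-proving it via orbit closures.

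Two caveats, though. First, your primary route overreaches: the claims that $\Aut(X)$ is a linear algebraic group and hence finite are unavailable (and finiteness is in fact false) when $\dim X\le 2$. With $i_X=0$ one cannot recover the invariance of $A=\O_X(1)$ from $K_X$, Theorem~\ref{theorem:induced-action} does not apply, and a quasi-smooth Calabi--Yau weighted hypersurface surface (a K3 with canonical singularities) can have an infinite discrete automorphism group, while in dimension one $X$ is an elliptic curve whose automorphism group contains all translations. So only the argument carried out at the level of $\bar{\Gamma}$ survives --- which is all the statement asks for. Second, the faithfulness machinery (Lemma~\ref{lemma:Aut-not-cone-reductive}, Corollary~\ref{corollary:unipotent-Aut-not-cone-hypersurface}, and the worry about linear cones) is unnecessary: the proposition concerns the image of $\Gamma$ in $\Aut(X)$, which acts faithfully on $X$ tautologically, so the orbit-closure (or Ueno) argument applies to $\bar{\Gamma}$ directly; this is exactly why the paper's statement carries no ``not an intersection with a linear cone'' hypothesis. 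If you do want the stronger assertion that $\Gamma$ itself is finite, note in addition that Corollary~\ref{corollary:unipotent-Aut-not-cone-hypersurface} only controls subgroups of the unipotent radical $R_U$ of $\Aut(\PP)$, whereas a copy of $\CC^+$ in $\Gamma$ need not lie in $R_U$, so that step would need a separate argument; but none of this is required for the statement as given.
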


\begin{proof}
Note that $\Gamma$ is a linear algebraic group, and its image $\bar{\Gamma}$
in $\Aut(X)$ is a linear algebraic group as well. (Actually,
the whole group $\Aut(X)$ is also a linear algebraic group if the dimension
of $X$ is at least $3$, but we do not need this for the proof.)
On the other hand, the variety $X$ is not uniruled by
Proposition~\ref{proposition:rational-curves}(iii).
Hence the linear algebraic group $\bar{\Gamma}$ is finite, see for instance
\cite[Theorem~14.1]{Ueno}.
\end{proof}

We do not know whether Proposition~\ref{proposition:CY-restriction}
can be generalized to the case when $i_X<0$, cf.
Example~\ref{example:Prokhorov}.

\end{document}